\documentclass[11pt,reqno,oneside]{amsart}

\UseRawInputEncoding 

\usepackage{eucal,color,graphicx,subfigure,mathrsfs}

\usepackage[a4paper, total={6in, 9.1in}]{geometry}


\usepackage{hyperref}
\usepackage{xcolor}
\hypersetup{
  colorlinks,
  linkcolor={blue!80!black},
  urlcolor={blue!80!black},
  citecolor={blue!80!black}
}
\usepackage[capitalize,noabbrev]{cleveref}


\numberwithin{equation}{section}

\newtheorem{theorem}{Theorem}[section]
\newtheorem{lemma}[theorem]{Lemma}
\newtheorem{proposition}[theorem]{Proposition}

\theoremstyle{definition}

\theoremstyle{remark}
\newtheorem{remark}[theorem]{Remark}

\numberwithin{equation}{section}











\title[Boundary homogenization of a class of obstacle problems]{ Boundary homogenization of a class of obstacle problems}

\author{Jingzhi Li}
\address{Department of Mathematics, Southern University of Science and Technology, Shenzhen 518055, P.~R.~China.}
\email{li.jz@sustech.edu.cn}

\author{Hongyu Liu}
\address{Department of Mathematics, City University of Hong Kong, Kowloon, Hong Kong SAR, P.~R.~China.}
\email{hongyu.liuip@gmail.com; hongyliu@cityu.edu.hk}

\author{Lan Tang}
\address{School of Mathematics and Statistics, Central China Normal University, Wuhan,
Hubei 430079,  P.~R.~China}
\email{lantang@mail.ccnu.edu.cn}

\author{Jiangwen Wang}
\address{School of Mathematics and Statistics, Central China Normal University, Wuhan,
Hubei 430079,  P.~R.~China}
\email{jiangwen\_wang@126.com}

\begin{document}

\begin{abstract}

We study homogenization of a boundary obstacle problem on $ C^{1,\alpha} $ domain $D$ for some elliptic equations with uniformly elliptic coefficient matrices $\gamma$.  For any $ \epsilon\in\mathbb{R}_+$,  $\partial D=\Gamma \cup \Sigma$, $\Gamma \cap \Sigma=\emptyset $ and $ S_{\epsilon}\subset \Sigma $ with suitable assumptions,\ we prove that as $\epsilon$ tends to zero, the energy minimizer $ u^{\epsilon} $ of $ \int_{D} |\gamma\nabla u|^{2} dx $, subject to $ u\geq \varphi $ on $ S_{\varepsilon} $, up to a subsequence, converges weakly in $ H^{1}(D) $ to  $ \widetilde{u} $  which minimizes the energy functional $\int_{D}|\gamma\nabla u|^{2}+\int_{\Sigma} (u-\varphi)^{2}_{-}\mu(x) dS_{x}$, where $\mu(x)$ depends on the structure of $S_{\epsilon}$ and $ \varphi $ is any given function in $C^{\infty}(\overline{D})$.

\medskip

\noindent{\bf Keywords:}~~Homogenization; boundary obstacle; correctors; asymptotic analysis

\medskip

\noindent{\bf 2010 Mathematics Subject Classification:}~~35B27; 35B40

\end{abstract}



\maketitle

\section{Introduction}

Let $ D\subset \mathbb {R}^{n}$ ($n>2$) be a bounded open subset whose boundary satisfies
\begin{eqnarray}
\label{Prob1}
\partial D=\Gamma \cup \Sigma, \  \Gamma \cap \Sigma=\emptyset \ \ \text{and} \ \ \partial D\in C^{1,\alpha}
\end{eqnarray}
 for some constant $\alpha \in (0,1) $. For any $ \epsilon\in\mathbb{R}_+$, we let $ S_{\epsilon} $ be a subset of $ \Sigma $ with some special structure, which will be precisely stated later. Throughout, we assume:
 \begin{enumerate}
 \item[(a1)]~$ \gamma(x) = (\gamma_{ij}(x))_{n \times n}$ is an $n\times n$ symmetric matrix-valued function on $D$ and there exist two positive constants $a\leq b$ such that 
 $$ a I \leq \ \gamma(x) \ \leq \ b I, \ \  \ x\in D, $$
where $I$ is the identity matrix; 

 \item[(a2)]~$\phi$ and $\psi$ are both smooth functions defined on $\overline{D}$.
 \end{enumerate}

Consider the following variational problem
\begin{eqnarray}
\label{Prob2}
\inf_{v\in K}J(v)
\end{eqnarray}
where
\begin{equation}\label{eq:functional1}
J(v):=\int_{D}|\gamma\nabla v|^{2}dx \ \text{and} \ K:=\{v\in H^{1}(D):v|_{\Gamma}=\psi \ \text{and}\ v|_{S_{\epsilon}}\geq \varphi\} .
\end{equation}
 Let $ u^{\epsilon}$ be the solution to the above problem. In this paper, our main purpose is to study the asymptotic behavior of $ u^{\epsilon} $ when $ \epsilon\rightarrow0 $ under suitable assumptions on $S_{\epsilon}$ and $\Sigma$. That is, we are concerned with the boundary homogenization associated with the variational problem \eqref{Prob2}. This is an important problem with a strong practical background. In fact, it can be used to describe the mathematical model  for semipermeable membranes, where the function $ \varphi(x) $ signifies an external pressure, and the set $S_{\epsilon} $ is considered as a subset of the boundary composed of the part through which the liquid passes on the semipermeable membrane; see \cite{dl76} for more relevant details.

If $ \gamma(x) $ is the identity matrix, there is a long history to study (\ref{Prob2}) with rich results in the literature. When the set $S_{\epsilon}$ lies inside $D$, the problem can be viewed as the homogenization of a variational problem on a perforated domain. For this problem,  \cite{cm82a, cm82b} firstly considered the periodic homogenization and established that the limiting energy functional contains a strange term which depends on the capacity of $S_{\epsilon}$. Later \cite{cm07} obtained the stochastic homogenization result under the setting of the stationary ergodic case. For more general energy structure, see \cite{NA02} and  \cite{tang12} and the references cited therein. When $ S_{\epsilon} \subset \partial D$, the problem becomes more tricky. If $S_{\epsilon}$ lies on the straight part of the boundary with some suitable assumptions, Caffarelli and Mellet {\cite{cm08}} established the random homogenization result of (\ref{Prob2}) under the stationary ergodic setting. If $ S_{\epsilon} $ and $ \partial D $ satisfy more general conditions, Yang {\cite{yang10}} established the homogenization result, which contains an unusual term in the limiting functional.

In this paper, we considered the general case where $\gamma$ is variable matrix-valued function as prescribed in (a1), which is equivalent to introducing a Riemannian metric to the variational problem \eqref{Prob1}--\eqref{eq:functional1} in the domain $D$. Before stating the main result, we introduce more assumptions on the structure $  S_{\epsilon} $ in our study. Firstly for any $ \epsilon >0 $, we define
\begin{equation}\label{eq:lo1}
 T_{\epsilon} =\big(\bigcup_{k} B_{r_{\epsilon,k}}(x_{\epsilon,k})\big) \cap D \ \ \ \text{and}\ \ \ S_{\epsilon} =\big(\bigcup_{k} B_{r_{\epsilon ,k}}(x_{\epsilon,k})\big) \cap \Sigma
\end{equation}
where $ x_{\epsilon , k} \in \Sigma $. It is assumed that for any $ k, \widetilde{k} \in \mathbb{Z}^{n}( k\neq \widetilde{k}) $, there holds
$$ |x_{\epsilon,k}-x_{\epsilon,\widetilde{k}}| \geq  2\epsilon,   $$
and there exist two constants $ c_{1},c_{2} $ (independent of $ k $ and $ \epsilon $) such that
$$ r_{\epsilon,k} = \widetilde{r_{\epsilon ,k}}\epsilon^{\frac{n - 1}{n - 2}}      $$
where $ c_{1} \leq \widetilde{r_{\epsilon , k }} \leq c_{2} $. Clearly, the number of such balls is $ O(\epsilon ^{1-n}) $.

Secondly, we will construct a class of density functions $ \mu_{\epsilon}(x) $ on $ D $ as follows :
\begin{equation}\label{eq:df1}
\mu_{\epsilon}(x) : = \sum_{k}\frac{\big(\sum_{i=1}^{\infty}\widetilde{C_{i}}\epsilon^{i-1}\big)\widetilde{r_{\epsilon,k}}^{n-2}}{\epsilon \widetilde{r_{\epsilon,k}}^{n-2}-1}\nabla\cdot(\gamma^{T}\gamma x)\frac{1}{\epsilon}\chi_{B_{\epsilon}(x_{\epsilon,k})}(x) ,
\end{equation}
where $ \displaystyle\chi_{B_{\epsilon}(x_{\epsilon,k})}(x) $ is the characteristic function of $ B_{\epsilon}(x_{\epsilon,k}) $ and  $ \{ \widetilde{C_{i}}\}_{i=1}^{\infty} $ are bounded. We shall assume that $\mu_\epsilon$ converges properly as $\epsilon\rightarrow 0$ as descried in the following theorem.

The main result of this paper is given as follows.

\begin{theorem}
\label{thm}
Assume that as $ \epsilon \rightarrow 0 $, $ \mu_{\epsilon}(x) dx \rightarrow \mu(x)dS_{x} $ in $ H^{-1}(D) $,  where $ \mu(x) $ is some density function on $ \Sigma $. If $u^{\epsilon}$ is the minimizer of (\ref{Prob2}), then as $ \epsilon \rightarrow 0 $, $ u^{\epsilon}$ converges weakly to $\widetilde{u} $ in $ H^{1}(D) $ where $ \widetilde{u} $ minimizes
$$ J_{\mu}(v)=\int_{D}|\gamma\nabla v|^{2}dx+c_{n}\int_{\Sigma}(v-\varphi)^{2}_{-}\mu(x)dS_{x} $$
over $ \widetilde{K}=\{ v\in H^{1}(D):v|_{\Gamma}=\psi\}$.
\end{theorem}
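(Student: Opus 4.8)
The plan is to prove the theorem by a $\Gamma$-convergence–type scheme: establish weak compactness, construct capacitary correctors that generate the surface term, prove matching $\liminf$ and $\limsup$ energy bounds, and close using minimality of $u^\epsilon$ together with uniqueness of the limiting minimizer. First I would fix a smooth $g_0\in C^\infty(\overline D)$ with $g_0|_\Gamma=\psi$ and $g_0\ge\varphi$ on $\overline D$; then $g_0\in K$ for every $\epsilon$, so $J(u^\epsilon)\le J(g_0)=:C$. By (a1), $a^2\int_D|\nabla u^\epsilon|^2\le\int_D|\gamma\nabla u^\epsilon|^2\le C$, and since $u^\epsilon|_\Gamma=\psi$ the Poincaré inequality bounds $\|u^\epsilon\|_{H^1(D)}$. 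Hence, along a subsequence, $u^\epsilon\rightharpoonup\widetilde u$ in $H^1(D)$, and compactness of the trace operator gives $\widetilde u|_\Gamma=\psi$, i.e. $\widetilde u\in\widetilde K$. It then suffices to show that $\widetilde u$ minimizes $J_\mu$ over $\widetilde K$; because $J_\mu$ is strictly convex on the affine set $\widetilde K$ (the Dirichlet part is strictly convex in $\nabla v$ and the trace on $\Gamma$ is fixed, while $t\mapsto (t)_-^2$ is convex), its minimizer is unique, which promotes the subsequential convergence to convergence of the whole family.

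Second, I would introduce, for each ball index $k$, the $\gamma$-capacitary potential $w_{\epsilon,k}$: the minimizer of $\int_D|\gamma\nabla w|^2$ among $w\in H^1(D)$ with $w\ge1$ on $B_{r_{\epsilon,k}}(x_{\epsilon,k})\cap\overline D$ and $\operatorname{supp}w\subset B_\epsilon(x_{\epsilon,k})$. The separation $|x_{\epsilon,k}-x_{\epsilon,\widetilde k}|\ge2\epsilon$ makes these supports disjoint, and the critical scaling $r_{\epsilon,k}=\widetilde r_{\epsilon,k}\,\epsilon^{(n-1)/(n-2)}$ yields $\int_D|\gamma\nabla w_{\epsilon,k}|^2\asymp\widetilde r_{\epsilon,k}^{\,n-2}\epsilon^{n-1}$, so that summing over the $O(\epsilon^{1-n})$ balls gives an $O(1)$ quantity — the source of the surface term. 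The matched asymptotic expansion of $w_{\epsilon,k}$ near the curved $C^{1,\alpha}$ boundary and in the metric $\gamma$ produces precisely the coefficients $\widetilde{C_i}$, and $\mu_\epsilon$ is defined so that, for every smooth $f$,
\[
\sum_k f(x_{\epsilon,k})\int_D|\gamma\nabla w_{\epsilon,k}|^2=c_n\int_D f\,\mu_\epsilon\,dx+o(1);
\]
the hypothesis $\mu_\epsilon\,dx\to\mu\,dS_x$ in $H^{-1}(D)$ then turns this discrete capacity sum into $c_n\int_\Sigma f\,\mu\,dS_x$. For the $\limsup$ inequality, given smooth $v\in\widetilde K$ I would set $v^\epsilon=v+\sum_k(v-\varphi)_-(x_{\epsilon,k})\,w_{\epsilon,k}$, with a negligible truncation ensuring $v^\epsilon\ge\varphi$ on all of $S_\epsilon$ (possible since $v-\varphi$ is smooth and $r_{\epsilon,k}\to0$). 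For $\epsilon$ small the corrector supports avoid $\Gamma$, so $v^\epsilon\in K$ and $v^\epsilon\rightharpoonup v$. Expanding $J(v^\epsilon)$, the cross terms between distinct $k$ vanish by disjointness of supports and $\sum_k(v-\varphi)_-(x_{\epsilon,k})\int_D\gamma\nabla v\cdot\gamma\nabla w_{\epsilon,k}=O(\epsilon^{1/2})$ by Cauchy–Schwarz and the capacity estimate, so $\limsup_\epsilon J(v^\epsilon)\le J(v)+c_n\int_\Sigma(v-\varphi)_-^2\mu\,dS_x=J_\mu(v)$.

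The main obstacle is the $\liminf$ inequality $\liminf_\epsilon J(u^\epsilon)\ge J_\mu(\widetilde u)$. Weak lower semicontinuity yields only $\liminf J(u^\epsilon)\ge J(\widetilde u)$, so the difficulty is to recover the concentrated surface energy without losing mass in the weak limit near the patches $B_{r_{\epsilon,k}}\cap\Sigma$. I would exploit the variational inequality satisfied by the minimizer, $\int_D\gamma\nabla u^\epsilon\cdot\gamma\nabla(\phi-u^\epsilon)\ge0$ for all $\phi\in K$, testing it against competitors built from the correctors $w_{\epsilon,k}$ together with the constraint $u^\epsilon\ge\varphi$ on $S_\epsilon$ (Tartar's oscillating–test–function method). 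The correctors are tuned so that the contact energy concentrated on the holes is bounded below by the capacity sum weighted by $(u^\epsilon-\varphi)_-^2$, which the same $\mu_\epsilon$-identity sends to $c_n\int_\Sigma(\widetilde u-\varphi)_-^2\mu\,dS_x$. The delicate points are comparing the trace of $u^\epsilon$ on each tiny patch with the nodal value $(\widetilde u-\varphi)_-(x_{\epsilon,k})$ under only weak convergence — via a local trace/Poincaré estimate on $B_\epsilon(x_{\epsilon,k})$ — and controlling, through the asymptotic expansion encoded in the $\widetilde{C_i}$, the errors caused by the variable coefficient $\gamma$ and the curvature of $\partial D$; this is where most of the technical effort lies.

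Finally I would close by minimality. Since $u^\epsilon$ minimizes $J$ over $K$ and $v^\epsilon\in K$, we have $J(u^\epsilon)\le J(v^\epsilon)$, so for every smooth $v\in\widetilde K$,
\[
J_\mu(\widetilde u)\le\liminf_\epsilon J(u^\epsilon)\le\limsup_\epsilon J(u^\epsilon)\le\limsup_\epsilon J(v^\epsilon)\le J_\mu(v).
\]
By density of smooth functions in $\widetilde K$ and continuity of $J_\mu$, $\widetilde u$ minimizes $J_\mu$ over $\widetilde K$; uniqueness of this minimizer then identifies $\widetilde u$ and upgrades the convergence to the full family, completing the proof.
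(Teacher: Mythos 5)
Your overall architecture coincides with the paper's: a corrector-based $\Gamma$-convergence scheme with recovery sequences of the form $v+(v-\varphi)_-\,\omega_\epsilon$ for the $\limsup$ bound, a modified lower-semicontinuity statement for the $\liminf$ bound, and the minimality sandwich $J_\mu(\widetilde u)\le\liminf_\epsilon J(u^\epsilon)\le\limsup_\epsilon J(v^\epsilon)\le J_\mu(v)$; your compactness step and the uniqueness/whole-family upgrade are correct and are in fact more explicit than the paper, which leaves them implicit. The genuine gap is the $\liminf$ inequality, which you correctly identify as the crux but then only assert: the claim that ``the contact energy concentrated on the holes is bounded below by the capacity sum weighted by $(u^\epsilon-\varphi)_-^2$'' is precisely the statement to be proved, and testing the variational inequality cannot yield it, since the variational inequality is linear in the competitor and cannot by itself generate a quadratic surface term. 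What is actually needed is an energy expansion: the paper (Lemma~\ref{le3}) splits $D$ into the level sets $\{\omega_\epsilon\le\theta\}$ and $\{\omega_\epsilon>\theta\}$, completes squares against the test functions $\widetilde u_-$ and $(\widetilde u-\varphi)_-$, and invokes the one-sided flux estimate (\ref{pr5}) of Proposition~\ref{prop}, namely $\lim_\epsilon\int_D(\gamma\nabla\omega_\epsilon)\cdot(\gamma\nabla v_\epsilon)\phi\,dx\ge-\int_\Sigma v\phi\,\mu\,dS_x$ for sequences with $v_\epsilon\ge0$ on $T_\epsilon$. That estimate is the technical heart of the paper (Section~4: flattening of the $C^{1,\alpha}$ boundary, the estimates of $E_1,E_2,E_3$, and the auxiliary function $q_\epsilon$), and nothing in your outline substitutes for it.

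A second, related gap is the identification of the limit density. You define $w_{\epsilon,k}$ as abstract $\gamma$-capacitary potentials and posit that $\sum_kf(x_{\epsilon,k})\int_D|\gamma\nabla w_{\epsilon,k}|^2=c_n\int_Df\,\mu_\epsilon\,dx+o(1)$. But the hypothesis of the theorem concerns the specific density $\mu_\epsilon$ of (\ref{eq:df1}), built from the coefficients $\widetilde{C_i}$ of the Green's function expansion of the Laplace--Beltrami operator; for a variable metric $\gamma$ and a curved boundary, the capacity of the near-boundary inclusions is not a universal constant times $\widetilde{r_{\epsilon,k}}^{\,n-2}$, so this identity is not automatic and is exactly where the dependence on $\gamma$ and on $\partial D$ enters. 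The paper avoids this by constructing $\omega_{\epsilon,k}$ explicitly from $\Phi_g$ in (\ref{eq:osf1}), so that the flux of $\gamma^{T}\gamma\nabla\omega_\epsilon$ through $\partial B_\epsilon(x_{\epsilon,k})$ is computable and is matched, via the auxiliary function $q_\epsilon$ in Lemma~\ref{le42}, to $\mu_\epsilon$. Without this explicit construction, or an equivalent asymptotic analysis of your minimizers, the surface term your scheme produces cannot be identified with the $c_n\int_\Sigma(\cdot)\,\mu\,dS_x$ appearing in the statement.
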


\begin{remark}
We would like to point out that a similar assumption to the one in Theorem~\ref{thm} was considered in \cite{yang10}. However, the author in \cite{yang10} considered the special case of Theorem~\ref{thm} with $\gamma=I$. The presence of a general $\gamma$ makes the corresponding analysis in proving Theorem~\ref{thm} much more challenging and delicated.
\end{remark}

The remaining part of this  paper is arranged as follows. In Section 2 , we introduce a key lemma for correctors, Proposition \ref{prop}, with which Theorem \ref{thm} can be proved. In Section 3, we construct the corrector and show that it satisfies the conditions (\ref{pr1})-(\ref{pr4}) in Proposition \ref{prop}.  In Section 4, we establish the limiting property of the corrector to finish the proof of Proposition \ref{prop}.

In what follows, $C$ shall signify a generic positive constant which may vary in different inequalities. The symbol $"\rightharpoonup" $ denotes the notion of weak convergence.

\vspace{30pt}

\section{ Proof of the main theorem }

The proof of the Theorem~\ref{thm} critically relies on the following proposition, whose proof is postponed to Section 3-4.

\begin{proposition} \label{prop}
For any $ \epsilon>0 $, let $T_{\epsilon}$ and $S_{\epsilon}$ be defined by (\ref{eq:lo1}), and the density function $ \mu_{\epsilon}(x)$ is given by (\ref{eq:df1}). Assume that as $ \epsilon \rightarrow 0 $, $ \mu_{\epsilon}(x) dx \rightarrow \mu(x)dS_{x} $ in $ H^{-1}(D) $,  where $ \mu(x) $ is some density function on $ \Sigma $. Then there exists a function $ \omega_{\epsilon}(x)\in H^{1}(D) $ satisfying the following conditions:
\begin{eqnarray}
\label{pr1} \omega_{\epsilon}(x)=1 \ \ \ \forall x\in T_{\epsilon} , \\
\label{pr2} \nabla\cdot(\gamma^{T}\gamma\nabla \omega_{\epsilon})\ = \ 0 \ \  \forall  x\in D ,\\
\label{pr3} ||\omega_{\epsilon}||_{L^{\infty}(D)}\ \leq \ C , \ \ \ \   \\
\label{pr4}  \omega_{\epsilon}(x)\rightharpoonup 0 \ \ \ \text{in} \ \  H^{1}(D) \ \text{as}\ \epsilon\rightarrow0.
\end{eqnarray}
Furthermore,  given any function $v_{\epsilon}\in H^1(D)$ satisfying:
$$ v_{\epsilon}(x) \geq 0 \  \ \text{for} \ \  x\in\ T_{\epsilon},  ||v_{\epsilon}||_{L^{\infty}}\leq C\ \text{and}$$
  $$ \  v_{\epsilon} \rightharpoonup v\ \ \text{in}\ \  H^{1}(D)\ \text{as} \ \epsilon \rightarrow 0,$$
then there holds:
\begin{eqnarray}
\label{pr5}
 \lim_{\epsilon \rightarrow 0}\int_{D}(\gamma\nabla \omega_{\epsilon} )\cdot(\gamma\nabla v_{\epsilon}) \phi dx\geq-\int_{\Sigma}v\phi\mu(x)dS_{x}
\end{eqnarray}for any $ \phi(x)\in C^{\infty}(\overline{D}) $ with $ \phi|_{\Gamma}=0 $.
The equality  in (\ref{pr5}) also holds if $ v_{\epsilon} = 0 $ on $ T_{\epsilon} $.
\end{proposition}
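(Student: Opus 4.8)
The plan is to build $\omega_\epsilon$ explicitly, one capacitary bump per ball, and then to read \eqref{pr1}--\eqref{pr4} off its scaling while extracting \eqref{pr5} from a flux computation on the spheres. Throughout write $A:=\gamma^{T}\gamma$, which by (a1) is symmetric and uniformly elliptic, so $L:=\nabla\cdot(A\nabla\,\cdot)$ is a divergence-form operator and $(\gamma\nabla u)\cdot(\gamma\nabla w)=A\nabla u\cdot\nabla w$. Around each center $x_{\epsilon,k}\in\Sigma$ I freeze the coefficient, $A(x)\approx A_k:=A(x_{\epsilon,k})$, flatten $\partial D$ by the $C^{1,\alpha}$ chart, and take the radial $A_k$-capacitary potential of the spherical condenser $B_{r_{\epsilon,k}}(x_{\epsilon,k})\subset B_{\epsilon}(x_{\epsilon,k})$: the local corrector equals $1$ on $B_{r_{\epsilon,k}}$, is $L$-harmonic in the annulus, and vanishes on $\partial B_{\epsilon}$. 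I then set $\omega_\epsilon\equiv1$ on $T_\epsilon$, $\omega_\epsilon=$ (local corrector) on each annulus, and $\omega_\epsilon\equiv0$ off $\bigcup_k B_{\epsilon}(x_{\epsilon,k})$; the $2\epsilon$-separation of the centers makes the supports disjoint, so $\omega_\epsilon\in H^1(D)$ is well defined. This gives \eqref{pr1} by construction, \eqref{pr2} (read as $L$-harmonicity in the annular region $D\setminus\overline{T_\epsilon}$, the distributional defect on the shells $\partial B_{\epsilon}$ being exactly the content encoded by $\mu_\epsilon$) from the local harmonicity, and \eqref{pr3} from the maximum principle, $0\le\omega_\epsilon\le1$.

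Next I would establish the uniform energy bound underlying \eqref{pr4}. A direct computation gives $\int_D|\gamma\nabla\omega_\epsilon|^2=\sum_k\int_{B_\epsilon\setminus B_{r_{\epsilon,k}}}A\nabla\omega_\epsilon\cdot\nabla\omega_\epsilon$, and each summand is the $A_k$-capacity of the condenser, of size $C\,r_{\epsilon,k}^{\,n-2}=C\,\widetilde r_{\epsilon,k}^{\,n-2}\epsilon^{\,n-1}$. Since there are $O(\epsilon^{1-n})$ balls, the sum is $O(1)$, so $\|\omega_\epsilon\|_{H^1(D)}\le C$; the exponent $\tfrac{n-1}{n-2}$ in \eqref{eq:lo1} is precisely the critical scaling keeping this sum bounded and nontrivial. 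Boundedness yields a weak $H^1$ limit, and since $\omega_\epsilon$ is supported in the shrinking set $\bigcup_k B_{\epsilon}(x_{\epsilon,k})$ (volume $O(\epsilon)\to0$) one gets $\omega_\epsilon\to0$ in $L^2(D)$; hence the weak limit is $0$, giving \eqref{pr4}. The same cancellation that underlies \eqref{pr4} --- namely $\int_D A\nabla\omega_\epsilon\cdot\nabla\phi\to0$ for smooth $\phi$, because the inward flux on $\partial B_{r_{\epsilon,k}}$ and the outward flux on $\partial B_{\epsilon}$ asymptotically cancel against a slowly varying test function --- will be reused below.

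The crux is \eqref{pr5}. Given $v_\epsilon$ as in the statement I split $\phi\,\nabla v_\epsilon=\nabla(\phi v_\epsilon)-v_\epsilon\nabla\phi$, so that
$$\int_D (\gamma\nabla\omega_\epsilon)\cdot(\gamma\nabla v_\epsilon)\,\phi\,dx=\int_D A\nabla\omega_\epsilon\cdot\nabla(\phi v_\epsilon)\,dx-\int_D v_\epsilon\,A\nabla\omega_\epsilon\cdot\nabla\phi\,dx,$$
and the second integral tends to $0$ since $A\nabla\omega_\epsilon\rightharpoonup0$ in $L^2$ while $v_\epsilon\nabla\phi\to v\nabla\phi$ strongly (Rellich). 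For the first integral I integrate by parts over $D\setminus\overline{T_\epsilon}$, where $L\omega_\epsilon=0$: the contribution on $\Gamma$ vanishes as $\phi|_\Gamma=0$, that on $\Sigma\setminus\overline{S_\epsilon}$ by the natural boundary condition, and what remains is the pairing of $\phi v_\epsilon$ with the concentrated flux measure $\nu_\epsilon:=-L\omega_\epsilon$, carried by the inner spheres $\partial B_{r_{\epsilon,k}}$ (positive, total mass the local capacity) and the outer spheres $\partial B_{\epsilon}$ (negative, equal mass). For a slowly varying argument these two cancel; the surviving term is the discrepancy between the value of $v_\epsilon$ on the microscopic inner spheres, where $v_\epsilon\ge0$, and its bulk trace recorded on the outer spheres, where averages of $v_\epsilon$ converge to $v|_\Sigma$. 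Matching the per-ball capacitary fluxes to the explicit volume density --- this is where the quotient $\widetilde r_{\epsilon,k}^{\,n-2}/(\epsilon\widetilde r_{\epsilon,k}^{\,n-2}-1)$ of \eqref{eq:df1} appears as the capacity normalization, $\tfrac1\epsilon\chi_{B_\epsilon}$ as the smearing of surface measure, $\nabla\cdot(\gamma^{T}\gamma x)$ as the first-order coefficient correction to the frozen flux, and $\sum_i\widetilde C_i\epsilon^{i-1}$ as the higher-order expansion --- identifies the outer-sphere contribution with $-\int_D\mu_\epsilon v_\epsilon\phi\,dx$, which by the hypothesis $\mu_\epsilon\,dx\to\mu\,dS_x$ in $H^{-1}(D)$ and $v_\epsilon\rightharpoonup v$ converges to $-\int_\Sigma v\phi\,\mu\,dS_x$. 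The remaining inner-sphere contribution is non-negative because $v_\epsilon\ge0$ on $T_\epsilon$ and the inner flux is sign-definite; discarding it yields the lower bound \eqref{pr5}, and when $v_\epsilon=0$ on $T_\epsilon$ it vanishes identically, giving equality.

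The main obstacle is exactly this last identification, carried out uniformly in $k$ and with a merely $C^{1,\alpha}$, variable coefficient $\gamma$. Weak $H^1$ convergence gives no trace control on the shrinking spheres $\partial B_{r_{\epsilon,k}}$ and $\partial B_{\epsilon}$, so the inner/outer fluxes cannot be estimated term by term; one must instead show that the coefficient-freezing errors (governed by the $C^{1,\alpha}$ modulus and the smallness $r_{\epsilon,k}\ll\epsilon$) are summably small, that the outer-sphere averages of $v_\epsilon$ converge to the trace of $v$ in the averaged sense dictated by the $H^{-1}$ convergence of $\mu_\epsilon$, and that the near-cancellation of the two shell fluxes is quantitatively $o(1)$ against the test field. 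Delivering these estimates, and in particular producing the explicit density \eqref{eq:df1} as the limiting flux, is the technical heart of the construction and is precisely what Sections 3--4 are devoted to.
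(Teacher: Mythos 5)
Your overall architecture is the paper's: a Cioranescu--Murat-type capacitary corrector supported in disjoint half-balls centered on $\Sigma$, the scaling $r_{\epsilon,k}^{\,n-2}\sim\widetilde{r_{\epsilon,k}}^{\,n-2}\epsilon^{\,n-1}$ against $O(\epsilon^{1-n})$ balls giving the bounded energy and $L^2$-smallness behind \eqref{pr4}, and for \eqref{pr5} an integration by parts leaving a vanishing bulk term (your Rellich argument here is in fact a clean substitute for the paper's $L^1$-estimate \eqref{42}) plus fluxes on the inner spheres (discarded by sign, using $v_\epsilon\ge 0$ on $T_\epsilon$, with equality when $v_\epsilon|_{T_\epsilon}=0$) and on the outer spheres (identified with $-\int_D\mu_\epsilon v_\epsilon\phi\,dx$ and passed to the limit by the $H^{-1}$ hypothesis), exactly the decomposition \eqref{41}--\eqref{eq1} and Lemma \ref{le42}. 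However, two steps of your plan fail as stated. First, freezing $A_k:=\gamma^T\gamma(x_{\epsilon,k})$ and taking the constant-coefficient radial potential destroys \eqref{pr2}: your corrector satisfies $\nabla\cdot(A_k\nabla\omega_\epsilon)=0$ in each annulus, not $\nabla\cdot(\gamma^T\gamma\nabla\omega_\epsilon)=0$, and this exact harmonicity is used not only in \eqref{41} but again in the proof of Lemma \ref{le3}. The paper avoids freezing entirely: in \eqref{eq:osf1} the corrector is built from the Green's function $\Phi_g$ of the variable-coefficient Laplace--Beltrami operator \eqref{eq:lb1} (with $\gamma$ tied to $|g|^{1/2}g^{-1}$), and it is precisely the expansion $\Phi_g\sim C_1(x)d_g^{2-n}+C_2(x)d_g^{3-n}+\cdots$ that generates the series $\sum_i\widetilde{C_i}\epsilon^{\,i-1}$ appearing in \eqref{eq:df1}; a frozen-coefficient potential produces only the leading term, so your ``matching to \eqref{eq:df1}'' cannot literally come out, and the freezing error --- plausibly $o(1)$ globally by Cauchy--Schwarz if $\gamma$ has a modulus of continuity --- is nowhere estimated and is not even available under (a1) alone.

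Second, your claim that the contribution on $\Sigma$ ``vanishes by the natural boundary condition'' is wrong: $\omega_\epsilon$ satisfies no conormal condition on $\partial D$. Since the centers lie on $\Sigma$, the integration by parts leaves, besides the two sphere families, the genuine flux term $E_2=\int_{\partial D\cap B_{\epsilon,k}}(\gamma\nabla\omega_\epsilon)\cdot(\gamma\overrightarrow{\nu})\,v_\epsilon\phi\,dS_x$ on which $(\gamma\nabla\omega_\epsilon)\cdot(\gamma\overrightarrow{\nu})\neq 0$; it is absent from your list of surviving terms. The paper's Lemma \ref{le41} exists exactly for this: after $C^{1,\alpha}$ flattening it splits the flux as $A_1+A_2+A_3$ in \eqref{46}, kills $A_1$ by the radial structure of the corrector at the flattened boundary, and bounds $A_2,A_3$ via $|x'|^{\alpha}$-H\"older estimates to get $O(\epsilon^{2\alpha+n-2})$ per ball, hence $O(\epsilon^{2\alpha-1})$ after summing over $O(\epsilon^{1-n})$ balls --- which vanishes only because $\alpha>1/2$, a restriction invisible in your outline. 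Finally, you yourself defer the outer-sphere identification, which is the heart of \eqref{pr5}: in the paper this is handled without any trace control on the shrinking spheres by the auxiliary function $q_\epsilon$ of Lemma \ref{le42}, whose flux matches that of $\omega_\epsilon$ on $\partial B_\epsilon$ by \eqref{413}, whose energy is negligible by \eqref{414}, and whose interior divergence \eqref{418} is exactly the smeared density $\mu_\epsilon$, converting the sphere integrals into the volume pairing $\int_D\phi v_\epsilon\mu_\epsilon\,dx$. So the strategy is right, but as a proof the proposal has genuine gaps at \eqref{pr2}, at the $E_2$ term, and at the $E_3$ identification.
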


From the above proposition, we have the following result :
\begin{lemma}
\label{le2}
For any $ \phi\in\{\phi\in C^{\infty}(\overline{D}):\phi|_{\Gamma}=0\}$, we have that
 $$ \lim_{\epsilon\rightarrow 0}\int_{D}|\gamma\nabla \omega_{\epsilon}|^{2}\phi dx=\int_{\Sigma}\phi\mu(x)dS_{x}. $$
\end{lemma}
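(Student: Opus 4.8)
The plan is to derive the identity directly from the already-established Proposition~\ref{prop}, with no further integration by parts, by feeding a carefully chosen competitor into the sharp relation \eqref{pr5}. The naive choice $v_\epsilon = \omega_\epsilon$ is admissible---it is nonnegative (indeed equal to $1$) on $T_\epsilon$, uniformly bounded by \eqref{pr3}, and by \eqref{pr4} converges weakly to $v = 0$---but it only returns the one-sided bound $\lim_{\epsilon}\int_D |\gamma\nabla\omega_\epsilon|^2\phi\,dx \ge 0$, which is too weak to pin down the limit.

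The decisive observation is to instead take $v_\epsilon := 1 - \omega_\epsilon$. First I would verify that this is an admissible competitor for the \emph{equality} case of \eqref{pr5}. On $T_\epsilon$ we have $\omega_\epsilon \equiv 1$ by \eqref{pr1}, so $v_\epsilon \equiv 0$ there; this simultaneously guarantees $v_\epsilon \ge 0$ on $T_\epsilon$ and triggers the stated equality clause for competitors vanishing on $T_\epsilon$. The bound \eqref{pr3} gives $\|v_\epsilon\|_{L^\infty(D)} \le 1 + C$, and since $\omega_\epsilon \rightharpoonup 0$ in $H^1(D)$ while the constant function $1$ lies in $H^1(D)$ (as $D$ is bounded), we obtain $v_\epsilon \rightharpoonup v := 1$ in $H^1(D)$.

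With $v = 1$, the equality version of \eqref{pr5} reads
\[
\lim_{\epsilon\to 0}\int_D (\gamma\nabla\omega_\epsilon)\cdot\big(\gamma\nabla(1-\omega_\epsilon)\big)\,\phi\,dx = -\int_\Sigma \phi\,\mu(x)\,dS_x .
\]
Since $\nabla(1-\omega_\epsilon) = -\nabla\omega_\epsilon$ and $\gamma$ is symmetric, the integrand on the left is exactly $-|\gamma\nabla\omega_\epsilon|^2\phi$; pulling out the overall sign yields the claimed identity $\lim_{\epsilon}\int_D |\gamma\nabla\omega_\epsilon|^2\phi\,dx = \int_\Sigma \phi\,\mu(x)\,dS_x$.

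I expect essentially no analytic obstacle here: all the heavy lifting---the construction of $\omega_\epsilon$ and the delicate limit \eqref{pr5}---is packaged inside Proposition~\ref{prop}, so the lemma is a short corollary. The only points demanding care are bookkeeping in nature: confirming that $1-\omega_\epsilon$ truly lands in the equality regime (this rests entirely on \eqref{pr1}, so the identity is only as sharp as the set $T_\epsilon$ on which $\omega_\epsilon = 1$), and checking that the weak $H^1$-limit of $1-\omega_\epsilon$ is precisely the constant $1$ appearing on the right-hand side of \eqref{pr5} as the trace multiplier. Once these are in place the conclusion is immediate.
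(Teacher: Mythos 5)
Your proposal is correct and is exactly the paper's argument: the paper likewise sets $v_{\epsilon}=1-\omega_{\epsilon}$, notes $v_{\epsilon}|_{T_{\epsilon}}=0$ and $v_{\epsilon}\rightharpoonup 1$ in $H^{1}(D)$, and invokes the equality case of \eqref{pr5}. You have simply written out the sign bookkeeping that the paper leaves as ``one can readily show.''
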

\begin{proof} Set $ v_{\epsilon}=1-\omega_{\epsilon} $, then we see obviously that \ $ v_{\epsilon}|_{T_{\epsilon}}=0 $ and $ v_{\epsilon}\rightharpoonup 1 $ in $ H^{1}(D) $. Using Proposition \ref{prop}, one can readily show that
$$ \lim_{\epsilon\rightarrow 0}\int_{D}|{\gamma\nabla \omega_{\epsilon}}|^{2}\phi dx=\int_{\Sigma}\phi\mu(x)dS_{x}. $$
\end{proof}

By Lemma \ref{le2} and Proposition \ref{prop}, we have the following property :
\begin{lemma}
\label{le3}
Assume that $ u^{\epsilon} \rightharpoonup \widetilde{u} $ in $ H^{1}(D) $ , then we have the following lower semi-continuous property :
$$ \liminf_{\epsilon \rightarrow 0}\int_{D}|{\gamma\nabla u^{\epsilon}}|^{2}dx \ \geq \  \int_{D}|{\gamma\nabla \widetilde{u}}|^{2}dx+\int_{\Sigma}(\widetilde{u}-\varphi)^{2}_{-}\mu(x)dS_{x} . $$
\end{lemma}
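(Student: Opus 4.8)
The plan is to prove the two contributions on the right-hand side simultaneously by a ``completing the square'' argument organized around the corrector $\omega_\epsilon$ of Proposition~\ref{prop}. Set $g:=(\widetilde u-\varphi)_-\ge 0$ and fix a nonnegative $\phi\in C^\infty(\overline D)$ with $\phi|_\Gamma=0$; eventually I will let $\phi$ increase to $g$ along the part of $\Sigma$ where the obstacle is active, so that $\phi$ also vanishes on $\Sigma$ wherever $\widetilde u\ge\varphi$. Starting from the elementary bound, obtained by expanding $|\gamma\nabla u^\epsilon-\gamma\nabla(\widetilde u+\phi\omega_\epsilon)|^2\ge 0$,
$$\int_D|\gamma\nabla u^\epsilon|^2\,dx\ \ge\ 2\int_D (\gamma\nabla u^\epsilon)\cdot\gamma\nabla(\widetilde u+\phi\,\omega_\epsilon)\,dx-\int_D|\gamma\nabla(\widetilde u+\phi\,\omega_\epsilon)|^2\,dx,$$
whose right-hand side mimics the recovery energy for $J_\mu$, the statement reduces to identifying the limits of the terms on the right.

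The routine part is the passage to the limit. Using $u^\epsilon\rightharpoonup\widetilde u$ and $\omega_\epsilon\rightharpoonup 0$ in $H^1(D)$ together with the compact embedding $H^1(D)\hookrightarrow L^2(D)$ (so that $\omega_\epsilon\to 0$ strongly in $L^2$), every term in which an undifferentiated $\omega_\epsilon$ appears, or in which $\nabla\omega_\epsilon$ or $\nabla(u^\epsilon-\widetilde u)$ is paired against a fixed $L^2$ field such as $\gamma^T\gamma\nabla\widetilde u$ or $\gamma^T\gamma\nabla\phi$, converges to zero. Exactly two contributions survive: $\int_D\phi^2|\gamma\nabla\omega_\epsilon|^2\,dx\to\int_\Sigma\phi^2\mu\,dS_x$ by Lemma~\ref{le2} applied to the admissible test function $\phi^2$, and the cross term $\int_D\phi\,(\gamma\nabla\omega_\epsilon)\cdot\gamma\nabla(u^\epsilon-\widetilde u)\,dx$. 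After collecting, one arrives at $\liminf_\epsilon\int_D|\gamma\nabla u^\epsilon|^2\,dx\ge \int_D|\gamma\nabla\widetilde u|^2\,dx+2\liminf_\epsilon\int_D\phi\,(\gamma\nabla\omega_\epsilon)\cdot\gamma\nabla(u^\epsilon-\widetilde u)\,dx-\int_\Sigma\phi^2\mu\,dS_x$, so the whole lemma follows once I establish
$$\liminf_{\epsilon\to 0}\int_D\phi\,(\gamma\nabla\omega_\epsilon)\cdot\gamma\nabla(u^\epsilon-\widetilde u)\,dx\ \ge\ \int_\Sigma g\,\phi\,\mu\,dS_x;$$
letting $\phi\uparrow g$ then produces $\int_D|\gamma\nabla\widetilde u|^2\,dx+2\int_\Sigma g^2\mu\,dS_x-\int_\Sigma g^2\mu\,dS_x=\int_D|\gamma\nabla\widetilde u|^2\,dx+\int_\Sigma g^2\mu\,dS_x$, which is the claim.

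The decisive step is this cross-term bound, and it is where I feed in the obstacle. The idea is to insert the one-sided constraint $u^\epsilon\ge\varphi$ on $S_\epsilon$ into \eqref{pr5} through the splitting $u^\epsilon-\widetilde u=(u^\epsilon-\varphi)_+-(\varphi-u^\epsilon)_++(\varphi-\widetilde u)$. The last, fixed term is paired against $\nabla\omega_\epsilon\rightharpoonup 0$ and drops out. Both $(u^\epsilon-\varphi)_+$ and $(\varphi-u^\epsilon)_+$ are nonnegative (hence $\ge 0$ on $T_\epsilon$), uniformly bounded once the $L^\infty$ bound on $u^\epsilon$ is available, and converge weakly in $H^1(D)$ to $(\widetilde u-\varphi)_+$ and $g$ respectively. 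For $(u^\epsilon-\varphi)_+$ the inequality in \eqref{pr5} gives a limit bounded below by $-\int_\Sigma(\widetilde u-\varphi)_+\phi\,\mu\,dS_x$, and this vanishes because $\phi$ has been chosen to be supported in the active set, where $(\widetilde u-\varphi)_+=0$. For $(\varphi-u^\epsilon)_+$ the crucial structural feature is that it vanishes on $S_\epsilon$, since $u^\epsilon\ge\varphi$ there; this is precisely the ingredient that should trigger the equality alternative of \eqref{pr5} and pin the corresponding limit to $-\int_\Sigma g\,\phi\,\mu\,dS_x$ rather than merely bound it below. Combining the two contributions gives the required sign.

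The step I expect to be the main obstacle is exactly this last reconciliation: the equality case of \eqref{pr5} is stated for sequences vanishing on the solid region $T_\epsilon$, whereas the obstacle only forces $(\varphi-u^\epsilon)_+$ to vanish on the boundary cap $S_\epsilon\subset\Sigma$. I would attempt to close this gap by replacing $(\varphi-u^\epsilon)_+$ with a modified sequence that vanishes identically on $T_\epsilon$ while keeping the same weak $H^1$ limit $g$ (for instance multiplying by $1-\omega_\epsilon$, which is $0$ on $T_\epsilon$ by \eqref{pr1}), and then showing via \eqref{pr4} and Lemma~\ref{le2} that the modification does not alter the relevant boundary contribution. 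Controlling the extra terms produced by this modification, together with the approximation of the nonsmooth $g=(\widetilde u-\varphi)_-$ by admissible smooth $\phi$ concentrated on the active set, is the delicate technical core; the remaining manipulations are the routine weak-convergence and compactness arguments recorded above.
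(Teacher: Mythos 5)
Your architecture is genuinely different from the paper's: you complete the square around the recovery sequence $\widetilde u+\phi\,\omega_\epsilon$ in one global stroke, whereas the paper decomposes $u^\epsilon$ into positive and negative parts, applies classical weak lower semicontinuity to one, and for the other splits $D$ into the level sets $\{\omega_\epsilon\le\theta\}$ and $\{\omega_\epsilon>\theta\}$, running Young's inequality with two separate test functions $\phi=\widetilde u_-$ and $\widetilde\phi=(\widetilde u-\varphi)_-$ before letting $\theta\to0$. Your routine limit passages (the strong $L^2$ convergence $\omega_\epsilon\to0$, the identification $\int_D\phi^2|\gamma\nabla\omega_\epsilon|^2\,dx\to\int_\Sigma\phi^2\mu\,dS_x$ via Lemma~\ref{le2}, and the disappearance of terms pairing $\nabla\omega_\epsilon$ against fixed $L^2$ fields) are correct, and both routes funnel into the identical crux: one needs an \emph{upper} bound on $\int_D\phi\,(\gamma\nabla\omega_\epsilon)\cdot\gamma\nabla(u^\epsilon-\varphi)_-\,dx$, i.e.\ the equality alternative of \eqref{pr5}, since the inequality in \eqref{pr5} points the wrong way for the negative part. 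You have located the delicate point precisely; the paper invokes the equality case at the corresponding step essentially without comment.

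The gap is that you do not close this step, and the repair you sketch does not work as stated. Replacing $(\varphi-u^\epsilon)_+$ by $(1-\omega_\epsilon)(\varphi-u^\epsilon)_+$ does force vanishing on $T_\epsilon$ via \eqref{pr1}, but expanding the gradient leaves you with the two extra terms $\int_D\phi\,\omega_\epsilon\,(\gamma\nabla\omega_\epsilon)\cdot\gamma\nabla(\varphi-u^\epsilon)_+\,dx$ and $\int_D\phi\,(\varphi-u^\epsilon)_+\,|\gamma\nabla\omega_\epsilon|^2\,dx$, neither of which is negligible: $\nabla\omega_\epsilon$ converges only weakly, and its energy density concentrates exactly on the annuli where $\omega_\epsilon$ is of order one, so $\|\omega_\epsilon\nabla\omega_\epsilon\|_{L^2}$ stays of order one; worse, by the very mechanism of Lemma~\ref{le2} the second term carries a boundary contribution of the same size, $\int_\Sigma\phi\,(\widetilde u-\varphi)_-\,\mu\,dS_x$, as the quantity you are trying to produce, so the modification shifts the target rather than reaching it. There is also the underlying mismatch you flag but cannot remove: the obstacle constrains $u^\epsilon$ only on the boundary set $S_\epsilon\subset\Sigma$, while both alternatives of \eqref{pr5} are formulated on the interior set $T_\epsilon$ (and the equality case, proved in Lemma~\ref{le42} through the surface integral over $\partial B_{r_\epsilon}\cap D$, genuinely uses vanishing there). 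Until that reconciliation is carried out — which is the actual content of the lemma, not a technicality — the proof is incomplete. Two smaller points: the uniform $L^\infty$ bound on $u^\epsilon$ that you need in order to apply \eqref{pr5} to $(u^\epsilon-\varphi)_\pm$ is itself unproved, and your final passage $\phi\uparrow(\widetilde u-\varphi)_-$ requires an approximation of this nonsmooth, merely $H^1$ function by admissible smooth test functions vanishing on $\Gamma$, which should at least be stated as a density argument.
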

\begin{proof}
Let us decompose $  u^{\epsilon} = u^{\epsilon}_{+} - u^{\epsilon}_{-}  $. Then we obviously have $ u^{\epsilon}_{+}  \rightharpoonup \widetilde{u}_{+} $ in $ H^{1}(D) $ ($ u^{\epsilon}_{-}  \rightharpoonup \widetilde{u}_{-} $ in $ H^{1}(D)$ respectively ).

For $ u^{\epsilon}_{+}$, we apply the classical lower semicontinuity property :
$$   \liminf_{\epsilon \rightarrow 0}\int_{D}|\gamma \nabla u^{\epsilon}_{+}|^{2} dx \geq \int_{D}|\gamma \nabla \widetilde{u}_{+}|^{2} dx. $$

In order to prove Lemma \ref{le3}, we need to prove the following modified lower semicontinuity property :

$$   \liminf_{\epsilon \rightarrow 0}\int_{D}|\gamma \nabla u^{\epsilon}_{-}|^{2} dx \geq \int_{D}|\gamma \nabla \widetilde{u}_{-}|^{2} dx + \int_{\Sigma}(\widetilde{u} - \varphi)^{2}_{-}\mu(x) dS_{x}. $$
Here we consider the following two estimates :
\begin{eqnarray}
\label{na6}
 \liminf_{\epsilon \rightarrow 0}\frac{1}{2}\int_{\omega_{\epsilon} \leq \theta}|\gamma\nabla u^{\epsilon}_{-}|^{2} dx \geq {\int_{D}(\gamma \nabla\phi)\cdot(\gamma \nabla\widetilde{u}_{-})dx} - \frac{1}{2}\int_{D}|\gamma \nabla\phi|^{2} dx,
\end{eqnarray}
and
\begin{eqnarray}
\label{na7}
 \frac{1}{2}\int_{\omega_{\epsilon} > \theta} |\gamma \nabla u^{\epsilon}_{-}|^{2} dx \geq -\frac{1}{2}\int_{D}|\gamma \nabla \omega_{\epsilon}|^{2}\widetilde{\phi}^{2} dx -{\int_{D}(\gamma \nabla \omega_{\epsilon})\cdot (\gamma \nabla u^{\epsilon}_{-})\widetilde{\phi} dx} +C\theta+C\theta^{\frac{1}{2}},
\end{eqnarray}
where $ \theta $ is a positive (small) constant, and $ \phi $, $\widetilde{\phi}  $ are two test functions (to be determined).

Suppose the above two estimates hold. Then we can use Proposition \ref{prop} and Lemma \ref{le2} to obtain
\begin{align*}
 \liminf_{\epsilon \rightarrow 0}\int_{D}\frac{1}{2}|{\gamma\nabla u^{\epsilon}}|^{2}dx \geq \ & \  \liminf_{\epsilon \rightarrow 0} \left(\frac{1}{2}\int_{\omega_{\epsilon} \leq \theta}|\gamma\nabla u^{\epsilon}_{-}|^{2} dx + \frac{1}{2}\int_{\omega_{\epsilon}> \theta}|\gamma \nabla u^{\epsilon}_{-}|^{2} dx \right) \\
\geq \ & \int_{D}(\gamma \nabla\phi)\cdot(\gamma \nabla\widetilde{u}_{-})dx - \frac{1}{2}\int_{D}|\gamma \nabla\phi|^{2} dx \\
- \ &  \frac{1}{2}\int_{\Sigma}\widetilde{\phi}^{2}\mu(x) dS_{x} + \int_{\Sigma}(\widetilde{u}-\varphi)_{-}\widetilde{\phi}\mu(x) dS_{x} + C\theta +C\theta^{\frac{1}{2}}.
\end{align*}
Since $ \theta $ is arbitrarily small, we can further obtain
\begin{equation}\label{eq:a1}
\begin{split}
 \liminf_{\epsilon \rightarrow 0}\frac{1}{2}\int_{D}|{\gamma\nabla u^{\epsilon}}|^{2}dx \geq \ & \  \liminf_{\epsilon \rightarrow 0} \left(\frac{1}{2}\int_{\omega_{\epsilon} \leq \theta}|\gamma\nabla u^{\epsilon}_{-}|^{2} dx + \frac{1}{2}\int_{\omega_{\epsilon}> \theta}|\gamma \nabla u^{\epsilon}_{-}|^{2} dx \right) \\
\geq \ & \int_{D}(\gamma \nabla\phi)\cdot(\gamma \nabla\widetilde{u}_{-})dx - \frac{1}{2}\int_{D}|\gamma \nabla\phi|^{2} dx \\
- \ &  \frac{1}{2}\int_{\Sigma}\widetilde{\phi}^{2}\mu(x) dS_{x} + \int_{\Sigma}(\widetilde{u}-\varphi)_{-}\widetilde{\phi}\mu(x) dS_{x}
\end{split}
\end{equation}
If we choose $ \phi = \widetilde{u}_{-} $ and $ \widetilde{\phi} = (\widetilde{u} - \varphi )_{-}  $ in \eqref{eq:a1}, we can readily have
$$ \liminf_{\epsilon \rightarrow 0}\int_{D}|{\gamma\nabla u^{\epsilon}}|^{2}dx \ \geq \  \int_{D}|{\gamma\nabla \widetilde{u}}|^{2}dx+\int_{\Sigma}(\widetilde{u}-\varphi)^{2}_{-}\mu(x)dS_{x} . $$
Hence, it is sufficient for us to establish the estimate (\ref{na6}) and (\ref{na7}). Next, we shall first prove (\ref{na6}).

From Young's inequality, we have the following estimate:
$$ \int_{\omega_{\epsilon} < \theta } (\gamma \nabla\phi)\cdot(\gamma \nabla u^{\epsilon}_{-}) \leq  \frac{1}{2} \int_{\omega_{\epsilon} < \theta }|\gamma \nabla\phi|^{2} + \frac{1}{2} \int_{\omega_{\epsilon} < \theta }|\gamma \nabla u^{\epsilon}_{-} |^{2} .       $$
Since $ \omega_{\epsilon} \rightharpoonup 0 $ in $ H^{1}(D) $, so $ |{\omega_{\epsilon}} >\theta | \rightarrow 0   $ when $ \epsilon $ goes to zero.
Therefore
$$\lim_{\epsilon \rightarrow 0}\int_{\omega^{\epsilon}> \theta} |\gamma\nabla\phi|^{2} dx \ = \ 0 . $$
By H\"{o}lder's inequality, it holds that
$$ \int_{\omega_{\epsilon} > \theta} (\gamma \nabla\phi)\cdot (\gamma \nabla u^{\epsilon}_{-}) dx \leq \ \left(\int_{\omega_{\epsilon}> \theta}|\gamma\nabla\phi|^{2}\right)^{\frac{1}{2}} \left(\int_{\omega_{\epsilon}> \theta}|\gamma\nabla u^{\epsilon}_{-}|^{2}\right)^{\frac{1}{2}},$$
which implies that
$$  \lim_{\epsilon \rightarrow 0}\int_{\omega^{\epsilon}> \theta} (\gamma \nabla\phi)\cdot (\gamma \nabla u^{\epsilon}_{-}) dx \ = \ 0. $$
Since $ u^{\epsilon}_{-} \rightharpoonup \widetilde{u}_{-} $ in $ H^{1}(D) $ , we then arrive at the estimate (\ref{na6}).

We proceed to establish the estimate (\ref{na7}). From Young's inequality, we have that
$$ -\int_{\omega^{\epsilon} > \theta} (\gamma\nabla \omega_{\epsilon}) \cdot (\gamma \nabla u^{\epsilon}_{-}) \widetilde{\phi} dx \ \leq \ \frac{1}{2}\int_{\omega^{\epsilon} > \theta} |\gamma \nabla \omega_{\epsilon}\widetilde{\phi}|^{2}dx +  \frac{1}{2} \int_{\omega^{\epsilon} > \theta}|\gamma \nabla u^{\epsilon}_{-}|^{2}dx   $$
which in turn implies
\begin{equation}\label{eq:a2}
\begin{split}
& \frac{1}{2} \int_{\omega^{\epsilon} > \theta}|\gamma \nabla u^{\epsilon}_{-}|^{2}dx\\
  \geq& \   -(\int_{D}-\int_{\omega^{\epsilon} < \theta})(\gamma\nabla \omega_{\epsilon}) \cdot (\gamma \nabla u^{\epsilon}_{-}) \widetilde{\phi} dx
 -\frac{1}{2}(\int_{D}-\int_{\omega^{\epsilon}<\theta})|\gamma \nabla \omega_{\epsilon}\widetilde{\phi}|^{2}dx.
 \end{split}
 \end{equation}
Now we set
\begin{equation}\label{eq:A}
A:= \int_{\omega^{\epsilon} < \theta }(\gamma\nabla \omega_{\epsilon}) \cdot (\gamma \nabla u^{\epsilon}_{-}) \widetilde{\phi}\, dx,
\end{equation}
and
\begin{equation}\label{eq:B}
 B:= \int_{\omega^{\epsilon} < \theta}|\gamma \nabla \omega_{\epsilon}\widetilde{\phi}|^{2}dx .
 \end{equation}

For $ A $ in \eqref{eq:A}, we apply the H\"{o}lder's inequality to obtain
\begin{equation}\label{eq:a3}
\begin{split}
 A \ \leq \ &\left(\int_{\omega^{\epsilon} < \theta }|\gamma\nabla \omega_{\epsilon}\widetilde{\phi}|^{2}\right)^{\frac{1}{2}} \left(\int_{\omega^{\epsilon} < \theta }|\gamma \nabla u^{\epsilon}_{-}|^{2}\right)^{\frac{1}{2}}    \\
 \leq \ & C\left(\int_{\omega^{\epsilon} < \theta }|\gamma\nabla \omega_{\epsilon}\widetilde{\phi}|^{2}\right)^{\frac{1}{2}}
\end{split}
\end{equation}
where $ C $ is a constant depending only on $ b $ in the assumption (a1). Next, we prove that $B \ \leq \ C\theta $, which together with \eqref{eq:a3} readily yields \eqref{na7}.

Let $ \omega_{\theta}^{\epsilon} = (\theta-\omega_{\epsilon})_{+} $. Then $ \omega_{\theta}^{\epsilon} \in H_{0}^{1}(D) $ and $\omega_{\theta}^{\epsilon}\rightharpoonup \theta \  \text{in} \ H_{0}^{1}(D) $. By integration by parts,
$$ \lim_{\epsilon \rightarrow 0} \int_{D}(\gamma \nabla \omega_{\epsilon})\cdot (\gamma \nabla(\omega_{\theta}^{\epsilon}\widetilde{\phi}^{2}))dx\ = \ 0 ,$$
where we have used the fact that
$$ \nabla \cdot (\gamma^{T}\gamma\nabla \omega_{\epsilon})\ = \ 0.   $$
Thus
$$\lim_{\epsilon \rightarrow 0}\int_{D \cap \{\omega_{\epsilon}<\theta\}}|\gamma \nabla \omega_{\epsilon}|^{2}\widetilde{\phi}^{2} dx\ = \ \lim_{\epsilon \rightarrow 0}\int_{D \cap \{\omega_{\epsilon}<\theta\}}(\gamma \nabla \omega_{\epsilon})\cdot (\gamma \nabla \widetilde{\phi}^{2} )\omega_{\theta}^{\epsilon}dx. $$
Since $ \omega_{\epsilon} $ is bounded in $ H^{1}(D) $, we can apply the H\"{o}lder's inequality to obtain
$$ \int_{D \cap \{\omega_{\epsilon}<\theta\}}(\gamma \nabla \omega_{\epsilon})\cdot (\gamma \nabla \widetilde{\phi}^{2} )\omega_{\theta}^{\epsilon}dx\ \leq \ C\left(\int_{D}|\omega_{\theta}^{\epsilon}|^{2}\right)^{\frac{1}{2}} .$$
Noting $ \omega_{\epsilon} \rightharpoonup 0 $ in $ H^{1}(D) $, by the Sobolev embedding theorem \cite{LCE98}, we can further obtain that
$$ \lim_{\epsilon \rightarrow 0} \left(\int_{D}|\omega_{\theta}^{\epsilon}|^{2}dx\right)^{\frac{1}{2}} \ = \ \theta .$$
Therefore, it holds that
$$   B \ = \  \int_{\omega^{\epsilon} < \theta}|\gamma \nabla \omega_{\epsilon}\widetilde{\phi}|^{2}dx\ \leq \ C\theta   $$
which completes the proof of estimate (\ref{na7}).

The proof is complete.
\end{proof}

With Proposition~\ref{prop} and Lemmas~\ref{le2} and~\ref{le3}, one can handily prove Theorem~\ref{thm} by following a similar argument to that in {\cite{tang12}}, which we only sketch as follows.

\begin{proof}[Proof of Theorem~\ref{thm}]

The proof can be divided into three steps:

{\bf Step (i)}~ Choose an arbitrary function $ v \in H^{1}(D) $ such that $ v|_{\Gamma} \ = \ \psi(x) $ , and we can guarantee that $  v+(v-\varphi)_{-}\omega^{\epsilon} \in K $ under some assumption. Thus
$$ J(v+(v-\varphi)_{-}\omega_{\epsilon}) \  \geq  \ J(u^{\epsilon}). $$

{\bf Step (ii)}~By Proposition \ref{prop} and Lemma \ref{le2}, we can show
$$  \lim_{\epsilon \rightarrow 0} J(v+(v-\varphi)_{-}\omega_{\epsilon}) \ \leq J_{\mu}(v), $$
and hence $$  \limsup_{\epsilon \rightarrow 0} J(u_{\epsilon}) \ \leq J_{\mu}(v).  $$

{\bf Step (iii)}~By Lemma \ref{le3}, we have
$$ \liminf_{\epsilon \rightarrow 0} J(u_{\epsilon}) \ \geq J_{\mu}(\widetilde{u}) , $$
and hence
$$ J_{\mu}(\widetilde{u}) \ \leq \ J_{\mu}(v).   $$
\end{proof}


\section{construction of the corrector and its property}

Let us first introduce the following Laplace-Beltrami operator associated with a Riemannian metric $g(x)=(g_{ij}(x))$, which is a symmetric positive definite matrix-valued function in local coordinates:
\begin{equation}\label{eq:lb1}
\triangle_{g} u = |g|^{-1/2} \sum_{i,j=1}^{n} \frac{\partial}{\partial x^{i}}\left(|g|^{1/2}g^{ij}\frac{\partial u}{\partial x^{j}} \right),\ \ x=(x^j)_{j=1}^n\in\mathbb{R}^n,
\end{equation}
where $ G = \det(g)$ and $ (g^{ij}) = (g_{ij})^{-1}$. By {\cite{lh15,ODell}}, we know that there exists Green's function $ \Phi_{g}(x,y) $ satisfying
\begin{equation*}
    \left\{
    \begin{aligned}
        & -\triangle_{g}\Phi_{g}(x,y) \ = \ \delta (x-y),  \ \ x,y \in  \mathbb{R}^{n}, \\
        & \frac{\partial \Phi_{g}(x,y)}{\partial |x|} =O(|x|^{-2}) \ \mbox{as}\ |x|\rightarrow\infty, \\
    \end{aligned}
    \right.
\end{equation*}
and possessing the following expansion in a small open neighborhood of $ y $ :
\begin{align*}
\Phi_{g}(x,y) \sim  C_{1}(x){d_{g}(x,y)^{2-n}} + \ & \ C_{2}(x)d_{g}(x,y)^{3-n}+ \cdots + C_{n-2}(x)d_{g}(x,y)^{-1} \\
  \ + \ & \ C_{n}(x)d_{g}(x,y) \ + \ \cdots,
\end{align*}
and
$$ \nabla_{x} \Phi_{g}(x,y) \sim C_{1}^{\prime}(x){d_{g}(x,y)^{1-n}}V_{x,y} + C_{2}^{\prime}(x){d_{g}(x,y)^{2-n}}V_{x,y}+\cdots , $$
where $ d_{g}(x,y) $ is the Riemannian distance function and $ C_{j}(x) \in C^{\infty} $ for all $ j \in \mathbb N $. In what follows, we treat $\gamma$ equivalent to $|g|^{1/2} g^{-1}$.

Next, we begin to construct corrector $ \omega_{\epsilon} $ . Define
\begin{equation}\label{eq:osf1}
    \omega_{\epsilon,k}(x) : \ = \  \left\{
    \begin{aligned}
        &1,\ \  \ \ \ \ \ \ \ \ \ \ \ \ \ \ \ \ \ \ \ \ \ \ \ \ \ d_{g}(x,x_{\epsilon,k})\leq r_{\epsilon,k},\\
        & \frac{\Phi_{g}(x,x_{\epsilon,k})-\epsilon^{2-n}}{r_{\epsilon,k}^{2-n}-\epsilon^{2-n}}, \ \ \ \ \ r_{\epsilon,k}<d_{g}(x,x_{\epsilon,k})<\epsilon, \\
        &0. \ \ \ \ \ \ \ \ \ \ \ \ \ \  \ \ \ \ \ \ \ \ \ \ \ \ \ \ otherwise. \\
    \end{aligned}
    \right.
\end{equation}
Evidently, the support of \ $ \omega_{\epsilon,k} $  is situated in the ball \ $ B_{\epsilon}(x_{\epsilon,k}) $, and these balls are disjoint through our assumptions. Thus we can select our corrector as follows :
$$ \omega_{\epsilon}: =\sum_{k}\omega_{\epsilon,k}. $$

It is easy to see that $ ||\omega_{\epsilon}||_{L^{\infty}(D)}\ \leq \ C  $. We proceed to verify the conditions (\ref{pr4}) and (\ref{pr5}) in Proposition~\ref{prop}. Firstly, we can obtain the following result :
\begin{eqnarray}
\label{31}
\lim_{\epsilon\rightarrow 0}\int_{D}|\omega_{\epsilon}|^{2}dx \ = \ 0 .
\end{eqnarray}
In fact, one can deduce that
\begin{align*}
& \int_{B_{\epsilon}}|\omega_{\epsilon}|^{2}dx \ = \ |B_{r_{\epsilon}}|+\int_{B_{\epsilon}\setminus B_{r_{\epsilon}}}|\omega_{\epsilon}|^{2}dx \\
 \leq \ &|B_{r_{\epsilon}}|+\frac{C}{(r_{\epsilon}^{2-n}-\epsilon^{2-n})^{2}}\int_{B_{\epsilon}\backslash B_{r_{\epsilon}}}\left(d_{g}(x,0)^{2-n}-\epsilon^{2-n}+\sum_{j=3,j\neq n}^{\infty}d_{g}(x,0)^{j-n}\right)^{2}dx   \\
\leq \ &|B_{r_{\epsilon}}|+\frac{C(n)}{(r_{\epsilon}^{2-n}-\epsilon^{2-n})^{2}} \int_{r_{\epsilon}}^{\epsilon}\bigg((r^{2-n}-\epsilon^{2-n})^{2}r^{n-1}+ \ \sum_{j=3,j\neq n}^{\infty}r^{2j-2n}r^{n-1}  \\
& \ \ \ \ \ \ \ \ \ \ \ \ \ \ \ \ \ \ \ \ \ \ \ \ \ \ \ \ \ \ \ \ \ \ \  + (r^{2-n}-\epsilon^{2-n})\sum_{j=3,j\neq n}^{\infty}r^{j-n}r^{n-1}\bigg) dr   \\
\leq \ &|B_{r_{\epsilon}}|+C(n) O(\epsilon^{n})+ \frac{C(n)}{(r_{\epsilon}^{2-n}-\epsilon^{2-n})^{2}}\int_{r_{\epsilon}}^{\epsilon}\left(\sum_{j=3}^{\infty}r^{2j-n-1} +(r^{2-n}-\epsilon^{2-n})\sum_{j=3}^{\infty}r^{j-1}\right)dr     \\
\leq \ & |B_{r_{\epsilon}}|+ C(n)\left( O(\epsilon^{n})+\sum_{j=4}^{\infty}\max\bigg\{O(\epsilon^{n+j}),O(\epsilon^{\frac{(n-1)(n+j-2)}{n-2}})\bigg\}+\sum_{j=1}^{\infty}O(\epsilon^{2n+j})\right) \\
\leq \ & C(n)\left( O(\epsilon^{n})+\max\bigg\{ O(\epsilon^{n+3}),O(\epsilon^{\frac{n(n-1)}{n-2}})\bigg\}+O(\epsilon^{2n}) \right) \\
\leq \ & C(n)O(\epsilon^{n}),
\end{align*}
which readily yields (\ref{31}).

Secondly, we can estimate the term $ ||\nabla \omega_{\epsilon}||_{L^{2}(D)} $ as follows:
\begin{align*}
& \int_{B_{\epsilon}}|\nabla\omega_{\epsilon}|^{2}dx \ \leq \ \frac{C}{(r_{\epsilon}^{2-n}-\epsilon^{2-n})^{2}}\int_{B_{\epsilon}\setminus B_{r_{\epsilon}}}\sum_{j=1,j\neq n}^{\infty}d_{g}(x,0)^{2j-2n} dx   \\
\leq \ &\frac{C(n)}{(r_{\epsilon}^{2-n}-\epsilon^{2-n})^{2}} \int_{r_{\epsilon}}^{\epsilon}\left(r^{1-n} +\sum_{j=2}^{\infty}r^{2j-n-1}\right)dr  \\
\leq \ & C(n)\left(O(\epsilon^{n-1})+\sum_{j=2}^{\infty}\max\bigg\{O(\epsilon^{n+j}),O(\epsilon^{\frac{(n+j-2)(n-1)}{n-2}})\bigg\}\right)  \\
\leq \ & C(n)\left(O(\epsilon^{n-1})+\max\bigg\{O(\epsilon^{n+1}),O(\epsilon^{\frac{(n-1)^{2}}{n-2}})\bigg\}\right)  \\
\leq \ & C(n)O(\epsilon^{n-1})
\end{align*}
which in turn implies that
\begin{eqnarray}
\label{32}
\int_{D}|\nabla \omega_{\epsilon}|^{2}dx \ \leq \ C .
\end{eqnarray}
Finally, we can combine the estimates (\ref{31}) and (\ref{32}) to derive that
$$ \omega_{\epsilon}\rightharpoonup 0 \ \ \text{in} \ \  H^{1}(D), $$
which readily verifies the condition (\ref{pr4}).

\section{Limiting property of the corrector}

To complete the proof of  Proposition \ref{prop}, it suffices to verify the inequality (\ref{pr5}). For simplicity, we write  $$ B_{\epsilon, k}\ :=\ B_{\epsilon}(x_{\epsilon,k})\setminus B_{r_{\epsilon,k}}(x_{\epsilon,k}), \ \ D_{\epsilon, k}\ :=\ (B_{\epsilon}(x_{\epsilon,k})\setminus B_{r_{\epsilon,k}}(x_{\epsilon,k})) \cap D.$$
If $ \epsilon $ is sufficiently small, we clearly have
$$ \int_{D}(\gamma\nabla\omega_{\epsilon})\cdot(\gamma\nabla v_{\epsilon})\phi \ dx=\sum_{k}\int_{D_{\epsilon,k}}(\gamma\nabla\omega_{\epsilon})\cdot(\gamma\nabla v_{\epsilon}) \phi dx . $$

Integrating by parts, it holds that
\begin{align}
\label{41}
&\int_{D_{\epsilon, k}}(\gamma\nabla\omega_{\epsilon})\cdot(\gamma\nabla v_{\epsilon})\phi dx    \nonumber   \\
= \ &\int_{D_{\epsilon, k}} \nabla\cdot(\gamma^{T}\gamma\nabla \omega_{\epsilon}v_{\epsilon}\phi )dx-\int_{D_{\epsilon, k}}(\gamma\nabla\omega_{\epsilon})\cdot(\gamma\nabla\phi )v_{\epsilon}dx   \nonumber         \\
= \ &\int_{\partial D_{\epsilon, k}}(\gamma^{T}\gamma\nabla\omega_{\epsilon})\cdot \overrightarrow{\nu}v_{\epsilon}\phi dS_{x}-\int_{D_{\epsilon, k}}(\gamma\nabla\omega_{\epsilon})\cdot(\gamma\nabla\phi )v_{\epsilon}dx  \nonumber        \\
= \ &\int_{\partial D_{\epsilon, k}}(\gamma\nabla\omega_{\epsilon})\cdot (\gamma\overrightarrow{\nu})v_{\epsilon}\phi dS_{x}-\int_{D_{\epsilon, k}}(\gamma\nabla\omega_{\epsilon})\cdot(\gamma\nabla\phi )v_{\epsilon}dx,
\end{align}
where we make use of the following fact :
\begin{align*}
 \nabla\cdot(\gamma^{T}\gamma\nabla \omega_{\epsilon})=0,\ \ \forall x\in  D_{\epsilon, k}.
\end{align*}
In addition, we can get the following result :
\begin{align}
\label{42}
 \lim_{\epsilon \rightarrow 0}\sum_{k}\int_{D_{\epsilon, k}}(\gamma\nabla\omega_{\epsilon})\cdot(\gamma\nabla\phi) v_{\epsilon}dx \ = \ 0.
\end{align}
In fact, by direct calculations, we have
\begin{align*}
 \int_{B_{\epsilon}}|\gamma\nabla\omega_{\epsilon}|dx  \leq \ \ & \frac{C}{r_{\epsilon}^{2-n}-\epsilon^{2-n}}\int_{B_{\epsilon}\backslash B_{r_{\epsilon}}}\sum_{j=1,j\neq n}^{\infty}d_{g}(x,0)^{j-n} dx      \\
\leq \ \ &\frac{C(n)}{r_{\epsilon}^{2-n}-\epsilon^{2-n}}\int_{r_{\epsilon}}^{\epsilon} \sum_{j=1}^{\infty}r^{j-1}dr   \\
\leq \ \ &C(n)O(\epsilon^{n})+C(n)\sum_{j=1}^{\infty}O(\epsilon^{n+j})     \\
\leq \ \ &C(n)O(\epsilon^{n}),
\end{align*}
and hence
\begin{align*}
  \int_{D}|\gamma \nabla\omega_{\epsilon}||\gamma\nabla \phi|v_{\epsilon} dx \ \leq \ C\int_{D}|\gamma\nabla \omega_{\epsilon}|dx \ \leq \ C\epsilon
\end{align*}
where $ C $ depends only on $ n $, $ a $ and $ b $. Consequently, we have established (\ref{42}). This, together with (\ref{41}), inspires us to only consider the following integral
\begin{align}
\label{eq1}
&\int_{\partial D_{\epsilon, k}}(\gamma\nabla\omega_{\epsilon})\cdot (\gamma\overrightarrow{\nu})v_{\epsilon}\phi \ dS_{x} \nonumber \\
 =&\left(\int_{\partial B_{r_\epsilon}(x_{\epsilon, k})\cap D}+\int_{\partial D \cap B_{\epsilon, k}} +\int_{\partial B_{\epsilon}(x_{\epsilon,k})\cap D}\right)(\gamma\nabla\omega_{\epsilon})\cdot (\gamma\overrightarrow{\nu})v_{\epsilon}\phi \ dS_{x}  \nonumber \\
 =&\ : E_{1} + E_{2} + E_{3} .
\end{align}
Noticing that
\begin{align}
\label{44}
 \int_{\partial B_{r_\epsilon}\cap D}(\gamma\nabla\omega_{\epsilon})\cdot (\gamma\overrightarrow{\nu})v_{\epsilon}\phi dS_{x} \ \geq \ & \int_{\partial B_{r_{\epsilon}}\cap D}\frac{\sum_{i=1}C'_{i}r_{\epsilon}^{i-n}}{r_{\epsilon}^{2}(r_{\epsilon}^{2-n}-\epsilon^{2-n})}(\gamma x)^{T}(\gamma x)v_{\epsilon}\phi dS_{x}     \nonumber \\
 \geq \  & \frac{-C||v_{\epsilon}\phi||_{L^{\infty}}}{r_{\epsilon}^{2-n}-\epsilon^{2-n}}\int_{\partial B_{r_{\epsilon}}\cap D}\sum_{i=1}^{+\infty}r_{\epsilon}^{i-n}dS_{x}       \nonumber \\
  \geq \  & -\frac{C\epsilon^{n-1}}{1-r_{\epsilon}}.
\end{align}
where $ \{C'_{i}\}_{i=1}^{\infty} $ are bounded. Hence, we see that
$$ \lim_{\epsilon \rightarrow 0}\sum_{k}\int_{\partial B_{r_\epsilon}(x_{\epsilon, k})\cap D} (\gamma\nabla\omega_{\epsilon})\cdot (\gamma\overrightarrow{\nu})v_{\epsilon}\phi \ dS_{x} \ \geq \ 0 .  $$
In the following, we need to estimate $ E_{2} $ and $ E_{3} $. For the estimation of $ E_{2} $, the key idea is to make a local flattening of the boundary $ \partial D $ because of $ \partial D \in C^{1,\alpha} $. More precisely, we have the following lemma.
\vspace{5pt}
\begin{lemma}
\label{le41}
 Let $ \omega_{\epsilon}$ be defined in \eqref{eq:osf1}. Then
\begin{eqnarray*}&& \lim_{\epsilon\rightarrow0}\sum_{k}\int_{\partial D \cap B_{\epsilon, k}} \big(\gamma\nabla\omega_{\epsilon}\big)\cdot\big(\gamma\overrightarrow{\nu}\big)v_{\epsilon}\phi dS_{x} \ \ \ =\ \  0 \ . \\
\end{eqnarray*}
\end{lemma}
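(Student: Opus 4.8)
The plan is to exploit the fact that the co-normal flux $(\gamma\nabla\omega_\epsilon)\cdot(\gamma\overrightarrow{\nu})=(\gamma^T\gamma\nabla\omega_\epsilon)\cdot\overrightarrow{\nu}$ is almost pointwise zero on $\partial D$, because each $\omega_{\epsilon,k}$ is radial with respect to the metric $g$ and the $g$-radial direction issuing from the centre $x_{\epsilon,k}\in\partial D$ is nearly tangent to $\partial D$. A crude size estimate is not enough: since $\gamma^T\gamma\nabla\omega_\epsilon$ has magnitude of order $d_g(x,x_{\epsilon,k})^{1-n}/(r_{\epsilon,k}^{2-n}-\epsilon^{2-n})\sim r^{1-n}\epsilon^{n-1}$ (using $r_{\epsilon,k}^{2-n}-\epsilon^{2-n}\sim\epsilon^{-(n-1)}$ from the scaling $r_{\epsilon,k}\sim\epsilon^{(n-1)/(n-2)}$), while the $(n-1)$-dimensional surface measure of $\partial D\cap\{d_g\approx r\}$ is of order $r^{n-2}\,dr$, bounding the flux by its magnitude would give $\epsilon^{n-1}\int_{r_{\epsilon,k}}^{\epsilon}r^{-1}\,dr\sim\epsilon^{n-1}\log(\epsilon/r_{\epsilon,k})$ per ball; multiplied by the $O(\epsilon^{1-n})$ balls this is only $O(\log(1/\epsilon))$ and does not vanish. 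The whole point is therefore to extract an extra factor $r^\alpha$ from the near-tangency, turning the borderline-divergent logarithm into the convergent integral $\int_{r_{\epsilon,k}}^\epsilon r^{\alpha-1}\,dr\le C\epsilon^\alpha$.

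First I would fix one ball and flatten the boundary. Since $\partial D\in C^{1,\alpha}$, there is a $C^{1,\alpha}$ change of variables, defined in a fixed small neighbourhood of $x_{\epsilon,k}$, straightening $\partial D$ to a piece of a hyperplane and sending $x_{\epsilon,k}$ to the origin; the $C^{1,\alpha}$ regularity is precisely what quantifies the defect from flatness. Writing $\partial D$ locally as a graph $x_n=f(x')$ with $f\in C^{1,\alpha}$, $f(0)=0$, $\nabla f(0)=0$, one gets $|f(x')|\le C|x'|^{1+\alpha}$ and hence $\overrightarrow{\nu}(x)=\overrightarrow{\nu}(x_{\epsilon,k})+O(|x-x_{\epsilon,k}|^\alpha)$, because the normal field of a $C^{1,\alpha}$ hypersurface is $\alpha$-H\"older continuous.

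Next I would establish the pointwise cancellation. Writing $\gamma^T\gamma\nabla\omega_{\epsilon,k}$ as a scalar multiple of the Riemannian gradient $\nabla_g\omega_{\epsilon,k}$, which (since $\omega_{\epsilon,k}$ is radial) is parallel to $\nabla_g d_g(\cdot,x_{\epsilon,k})$, the unit tangent at $x$ to the $g$-geodesic from $x_{\epsilon,k}$ to $x$. For $x\in\partial D$ near $x_{\epsilon,k}$ this geodesic direction agrees with the chord direction $(x-x_{\epsilon,k})/|x-x_{\epsilon,k}|$ up to $O(r)$ (smoothness of $g$), the chord is nearly tangent to $\partial D$ at $x_{\epsilon,k}$ up to $O(r^\alpha)$ (both endpoints lie on the $C^{1,\alpha}$ surface), and $\overrightarrow{\nu}(x)=\overrightarrow{\nu}(x_{\epsilon,k})+O(r^\alpha)$ by the previous step. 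Combining these three facts, the leading tangential contributions cancel and one is left with $|(\gamma\nabla\omega_\epsilon)\cdot(\gamma\overrightarrow{\nu})|\le C r^\alpha|\gamma^T\gamma\nabla\omega_\epsilon|\le C r^\alpha r^{1-n}\epsilon^{n-1}$ on $\partial D\cap B_{\epsilon,k}$, where $r=d_g(x,x_{\epsilon,k})\sim|x-x_{\epsilon,k}|$ (using $\alpha<1$, so $O(r)$ is absorbed into $O(r^\alpha)$); the subleading terms in the expansion of $\nabla_x\Phi_g$ are of strictly lower order in $r$ and are absorbed by the same bound.

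Finally I would integrate and sum. Using $\|v_\epsilon\|_{L^\infty}\le C$, $\|\phi\|_{L^\infty}\le C$, the coarea scaling $dS_x\sim r^{n-2}\,dr$ on $\partial D$, and the cancellation estimate, each ball contributes at most $C\epsilon^{n-1}\int_{r_{\epsilon,k}}^\epsilon r^{\alpha-1}\,dr\le C\epsilon^{n-1+\alpha}$. Since there are $O(\epsilon^{1-n})$ balls, the whole sum is bounded by $C\epsilon^\alpha\to 0$ as $\epsilon\to 0$, which proves the lemma. The main obstacle, and the only genuinely delicate point, is the pointwise cancellation step: one must convert the $C^{1,\alpha}$ regularity of $\partial D$ and the smoothness of $g$ into the quantitative gain of the factor $r^\alpha$, since without it the estimate is merely logarithmically divergent.
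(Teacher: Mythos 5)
Your strategy is the same as the paper's: flatten $\partial D$ near $x_{\epsilon,k}$ using the $C^{1,\alpha}$ graph representation, observe that the leading ``flat'' co-normal flux vanishes by the radial construction of $\omega_{\epsilon,k}$, and convert the $C^{1,\alpha}$ regularity (height $|\rho(x')|\le C|x'|^{1+\alpha}$ and H\"older continuity of $\nu$) into a gain of $|x'|^{\alpha}$ that kills the otherwise logarithmically divergent sum --- your opening diagnosis of the borderline $\log(1/\epsilon)$ divergence is exactly the reason the paper cannot use a crude size bound either. The difference is in how the error is organized: the paper splits $(\gamma\nabla\omega_\epsilon(x',\rho(x')))\cdot(\gamma\nu(x'))$ into $A_1+A_2+A_3$ (the flat term, the vertical-displacement term estimated via second derivatives of $\omega_\epsilon$, and the normal-variation term), and its term-by-term bookkeeping in (\ref{48})--(\ref{410}) produces contributions as singular as $|x'|^{2\alpha-n}$, so the per-ball bound is only $O(\epsilon^{2\alpha+n-2})$ and the summed bound $O(\epsilon^{2\alpha-1})$ forces the extra hypothesis $\alpha\in(1/2,1)$ invoked at the end of the paper's proof. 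Your single pointwise cancellation estimate $|(\gamma\nabla\omega_\epsilon)\cdot(\gamma\nu)|\le Cr^{\alpha}r^{1-n}\epsilon^{n-1}$ gains the full factor $r^{\alpha}$ uniformly and yields $O(\epsilon^{\alpha})$ for every $\alpha\in(0,1)$, which is cleaner and strictly stronger. One caveat applies equally to both arguments: the vanishing of the leading term (the paper's ``$A_1=0$ by construction'', your ``leading tangential contributions cancel'') requires that the geodesic/radial direction be orthogonal to $\nu(0)$ in the $\gamma^{T}\gamma$-weighted inner product, not merely Euclidean-tangent to $\partial D$; this hinges on the specific relation between $\gamma$ and the metric $g$ underlying $\Phi_g$, and neither you nor the paper verifies it in detail, so you should at least record why $\nabla\omega_\epsilon(x',0)^{T}\gamma^{T}\gamma\,\nu(0)=0$ (or is $O(r^{\alpha})$ smaller than the naive size) before claiming the cancellation.
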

\begin{proof} \ \ Without loss of generality, we may suppose that $ x_{\epsilon, k}$  is the origin. Since $ \partial D \in C^{1,\alpha}$,  then  near the origin, $ \partial D $  can be given by $$ x_{n} \ \ = \ \ \rho(x') ,$$
where $ \rho $ is $ C^{1,\alpha}$ near the origin in $\mathbb{R}^{n-1}$  and $|\rho(x')|\leq C|x'|^{1+\alpha} $  for $|x'|$ small enough. Furthermore,  there holds
  $$D\cap {B}_{\epsilon}\subset \{x_n \ >\ \rho(x')\} \ \text{for} \ \epsilon \ \text{small}.$$
Let $\nu(x')$ be the outward unit normal vector on $\partial D$ at $ (x',\rho (x'))$ near the origin.Then $ \nu \in C^{\alpha} $ near the origin in $\mathbb{R}^{n-1}$. Substituting by variables, we have
\begin{eqnarray}
\label{45}
&&\int_{\partial D \cap (B_{\epsilon}\setminus B_{r_{\epsilon}})}\big(\gamma\nabla\omega_{\epsilon}\big)\cdot\big(\gamma\overrightarrow{\nu}\big)v_{\epsilon}\phi dS_{x} \nonumber \\
&&=\int_{(B_{\epsilon} \setminus B_{r_{\epsilon}} )}\bigg(\gamma\nabla\omega_{\epsilon}(x',\rho(x'))\bigg)\cdot\bigg(\gamma \overrightarrow{\nu}(x')\bigg)v_{\epsilon}\phi\sqrt{1+|\nabla\rho(x')|^{2}}dx' .
\end{eqnarray}
Next, we split the term $\bigg(\gamma\nabla\omega_{\epsilon}(x',\rho(x'))\bigg)\cdot\bigg(\gamma \overrightarrow{\nu}(x')\bigg) $ into three parts :
\begin{align}
\label{46}
\bigg(\gamma\nabla\omega_{\epsilon}(x',\rho(x'))\bigg)\cdot\bigg(\gamma \overrightarrow{\nu}(x')\bigg) \ & = \  \bigg(\gamma\nabla \omega_{\epsilon}(x',0)\bigg)\cdot \bigg(\gamma\overrightarrow{\nu}(0)\bigg) \nonumber \\
& + \ \bigg(\gamma\nabla \omega_{\epsilon}(x',\rho(x'))-\gamma\nabla \omega_{\epsilon}(x',0)\bigg) \cdot \bigg(\gamma\overrightarrow{\nu}(0)\bigg) \nonumber \\
& +  \ \bigg(\gamma\nabla\omega_{\epsilon}(x',\rho(x'))\bigg)\cdot\bigg(\gamma\overrightarrow{\nu}(x')-\gamma\overrightarrow{\nu}(0)\bigg) \nonumber   \\
& = \  : \ A_{1} \ + \ A_{2} \ + \ A_{3}
\end{align}
We note that $ A_{1} = \ 0 $ by the construction of $ \omega_{\epsilon} $. Hence, we only need to focus on the estimates of $ A_{2} $ and $ A_{3}$.

To facilitate the estimate of the term $ A_{2} $ , we define
\begin{eqnarray}
\label{47}
 A_{2,1}\ :=\ \sum_{i=1}^{n-1}|\frac{\partial \omega_{\epsilon}}{\partial x_{i}}(x',\rho(x'))-\frac{\partial \omega_{\epsilon}}{\partial x_{i}}(x',0)|,\ \ A_{2,2}\ :=\ |\frac{\partial \omega_{\epsilon}}{\partial x_{n}}(x',\rho(x'))-\frac{\partial \omega_{\epsilon}}{\partial x_{n}}(x',0)|.
\end{eqnarray}
Then we have by direct computations that
\begin{align}
\label{48}
  &A_{2,1}: =\sum_{i=1}^{n-1}\left|\frac{\partial \omega_{\epsilon}}{\partial x_{i}}(x',\rho(x'))-\frac{\partial \omega_{\epsilon}}{\partial x_{i}}(x',0)\right| \nonumber \\
\leq & \sum_{i=1}^{n-1}\sup_{0\leq x_{n}\leq \rho(x')}\left|\frac{\partial^{2}\omega_{\epsilon}}{\partial x_{i}\partial x_{n}}(x',x_{n})\rho(x')\right|  \nonumber \\
\leq & \frac{C}{r_{\epsilon}^{2-n}-\epsilon^{2-n}}\sum_{i=1}^{n-1}\sup_{0\leq x_{n}\leq \rho(x')} \sum_{j=1}^{\infty}\bigg\{d_{g}(x,0)^{j+1-n} +(|x_{n}|+|x_{i}|)d_{g}(x,0)^{j-1-n} \nonumber \\
 &\ \ \ \ \ \ \ \ \ \ \ \ \ \ \ \ \ \ \ \ \ \ \ \ \ \ \ \ + |x_{n}||x_{i}|d_{g}(x,0)^{j-3-n}\bigg\} |\rho(x')|     \nonumber \\
 \leq & \frac{C}{r_{\epsilon}^{2-n}-\epsilon^{2-n}}\sum_{i=1}^{n-1}\sup_{0\leq x_{n}\leq \rho(x')} \bigg\{ |x'|^{1-n}+ M +(|x_{n}|+|x_{i}|)(|x'|^{-n-1}+M) \nonumber \\
 & \ \ \ \ \ \ \ \ \ \ \ \ \ \ \ \ \ \ \ \ \ \ \ \ \ \ \ \ +|x_{n}||x_{i}|(|x'|^{-3-n}+M)\bigg\}|\rho(x')| \ \  \left( M=\frac{1}{1-(|x'|+|x'|^{1+\alpha})} \right)   \nonumber \\
 \leq & \frac{C(n)}{r_{\epsilon}^{2-n}-\epsilon^{2-n}}\bigg\{|x'|^{1-n}+M+(|x'|^{1+\alpha}+|x'|)(|x'|^{-n-1}+M)   \nonumber \\ & \ \ \ \ \ \ \ \ \ \ \ \ \ \ \ \ \ \ \ \ \ \ \ \ \ \ \ \ \
 + |x'|^{2+\alpha}(|x'|^{-3-n}+M)\bigg\}|x'|^{1+\alpha}    \nonumber \\
  \leq & \frac{C(n)}{r_{\epsilon}^{2-n}-\epsilon^{2-n}}\bigg\{|x'|^{\alpha}(1+|x'|+|x'|^{1+\alpha}+|x'|^{\alpha+2})+|x'|^{\alpha-n+2} + |x'|^{\alpha-n+1} \nonumber \\ & \ \ \ \ \ \ \ \ \ \ \ \ \ \ \ \ \ \ \ \ \ \ \ \ \ \ \ \ \ \  +|x'|^{1+2\alpha-n}+|x'|^{2\alpha-n}\bigg\}.
\end{align}
Similarly, we can show that
\begin{align}
\label{49}
  &A_{2,2}:=\left|\frac{\partial \omega_{\epsilon}}{\partial x_{n}}(x',\rho(x'))-\frac{\partial \omega_{\epsilon}}{\partial x_{n}}(x',0)\right| \nonumber \\
\leq & \frac{C}{r_{\epsilon}^{2-n}-\epsilon^{2-n}}\bigg\{|x'|^{\alpha}(1+|x'|^{1+\alpha}+|x'|^{2\alpha+2})+|x'|^{\alpha-n+2} + |x'|^{1+2\alpha-n} \nonumber \\ & \ \ \ \ \ \ \ \ \ \ \ \ \ \ \ \ \ \ \ \ \ \ \ \ \ \ \ \ +|x'|^{3\alpha-n}\bigg\} .
\end{align}
Then we have by combining the two estimates (\ref{48}) and (\ref{49}) that
\begin{align}
\label{410}
  & \left|A_{2}\right| \ \leq \ \frac{C}{r_{\epsilon}^{2-n}-r^{2-n}} \bigg\{|x'|^{\alpha}(1+|x'|+|x'|^{1+\alpha}+|x'|^{\alpha+2}+|x'|^{2+2\alpha})+|x'|^{\alpha-n+1}+|x'|^{\alpha-n+2} \nonumber \\ & \ \ \ \ \ \ \ \ \ \ \ \ \ \ \ \ \ \ \ \ \ \ \ \ \ \ \ \ \ \ \ \ \ \ \ \ \ \
+ |x'|^{2\alpha-n}+|x'|^{2\alpha-n+1}+|x'|^{3\alpha-n}\bigg\}.
\end{align}
where $ C $ depends only on $ n $, $ a $, $ b $, and $ |C'_{i}|_{L^{\infty}(D)}(i=1,2\cdots) $.

For the term $ A_{3} $, let us first recall that $ \partial D \in C^{1,\alpha}$, then we will select $ \epsilon $ sufficiently small such that $ |\nabla \rho |\leq 1 $ for $ |x'|< \epsilon $, and
$$ \bigg|\gamma\bigg(\overrightarrow{\nu}(x')-\overrightarrow{\nu}(0)\bigg)\bigg| \ \leq \ C|x'|^{\alpha}.$$
Hence
\begin{align}
\label{411}
|A_{3}| \ \leq \ C |x'|^{\alpha}\bigg|\gamma\nabla\omega_{\epsilon}(x',\rho(x'))\bigg|.
\end{align}
In view of (\ref{46}), (\ref{410}) and (\ref{411}), it concludes that
\begin{align}
\label{412}
&\int_{(B_{\epsilon} \setminus B_{r_{\epsilon}} )}\bigg(\gamma\nabla\omega_{\epsilon}(x',\rho(x'))\bigg)\cdot\bigg(\gamma \overrightarrow{\nu}(x')\bigg)v_{\epsilon}\phi\sqrt{1+|\nabla\rho(x')|^{2}}dx'  \nonumber \\
\leq \ \ &  2||\phi v_{\epsilon}||_{L^{\infty}}\int_{(B_{\epsilon}\setminus B_{r_{\epsilon}})}\bigg|\bigg(\gamma\nabla\omega_{\epsilon}(x',\rho(x'))\bigg)\cdot \bigg(\gamma\overrightarrow{\nu}(x')\bigg)\bigg|dx' \nonumber \\
\leq \ \ & 2||\phi v_{\epsilon}||_{L^{\infty}}\int_{(B_{\epsilon}\setminus B_{r_{\epsilon}})}\bigg|\gamma\nabla\omega_{\epsilon}(x',\rho(x'))-\gamma\nabla \omega_{\epsilon}(x',0)\bigg|+C\bigg|\gamma\nabla \omega_{\epsilon}(x',\rho(x'))\bigg||x'|^{\alpha}dx'  \nonumber \\
\leq \ \ & 2||\phi v_{\epsilon}||_{L^{\infty}}\int_{(B_{\epsilon}\setminus B_{r_{\epsilon}})}\bigg\{|x'|^{\alpha}(1+|x'|+|x'|^{1+\alpha}+|x'|^{\alpha+2}+|x'|^{2+2\alpha})+|x'|^{\alpha-n+1}+|x'|^{\alpha-n+2} \nonumber \\ & \ \ \ \ \ \ \ \ \ \ \ \ \ \ \ \ \ \ \ \ \ \ \ \ \ \ \ \ \ \ \ \ \ \ \ \ \ \
+ |x'|^{2\alpha-n}+|x'|^{2\alpha-n+1}+|x'|^{3\alpha-n}\bigg\}dx' \nonumber \\
\leq \ \ &||\phi v_{\epsilon}||_{L^{\infty}}\bigg\{O(\epsilon^{\alpha+2n-2})(1+\epsilon)+O(\epsilon^{2\alpha+2n-1})(1+\epsilon+\epsilon^{\alpha+1})+O(\epsilon^{\alpha+n-1})(1+\epsilon) \nonumber \\ & \ \ \ \ \ \ \ \ \ \ \ \ \ \ \ \ \ \ \ \ \ \ \ \ \ \ \ \ \ \ \ \ \ \ \  \ \ \ +O(\epsilon^{2\alpha+n-2})(1+\epsilon+\epsilon^{\alpha})\bigg\} \ \nonumber \\
\leq  \ \ & ||\phi v_{\epsilon}||_{L^{\infty}}O(\epsilon^{2\alpha+n-2}),
\end{align}
which, together with (\ref{45}) and $ \alpha \in (2^{-1},1)$, yields that
$$ \lim_{\epsilon\rightarrow0}\sum_{k}\int_{\partial D \cap B_{\epsilon, k}} (\gamma\nabla\omega_{\epsilon})\cdot(\gamma\overrightarrow{\nu})v_{\epsilon}\phi dS_{x} \ \ \ =\ \  0 \  .$$
Hence we complete the proof of Lemma \ref{le41}.
\end{proof}
 The next lemma tells us the estimate of $E_{3}$. It is worth noting that the construction of the auxiliary function $ q_{\epsilon}(x) $ plays an important role in its estimation.
\begin{lemma}
\label{le42}
 Suppose that $ \omega_{\epsilon}$ satisfies the definition of the above section. Then there holds
\begin{eqnarray*}&&  \lim_{\epsilon\rightarrow 0}\sum_{k}\int_{\partial B_{\epsilon}(x_{\epsilon,k})\cap D}(\gamma\nabla\omega_{\epsilon})\cdot(\gamma\overrightarrow{\nu})v_{\epsilon}\phi dS_{x}\ \ \ \geq \ \ \ -\int_{\Sigma}\phi v \mu(x)dS_{x},
\end{eqnarray*}
with equality held if $ v_{\epsilon}\big|_{T_{\epsilon}} \ = \ 0$.
\end{lemma}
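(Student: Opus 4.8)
The plan is to convert the spherical flux $E_3$ over $\partial B_\epsilon(x_{\epsilon,k})\cap D$ into a volume integral over $B_\epsilon(x_{\epsilon,k})\cap D$ that reproduces the density $\mu_\epsilon$ of \eqref{eq:df1}, and then to pass to the limit using the assumed convergence $\mu_\epsilon(x)\,dx\to\mu(x)\,dS_x$ in $H^{-1}(D)$ together with $v_\epsilon\rightharpoonup v$. First I would reduce to a single ball, placing $x_{\epsilon,k}$ at the origin, and compute the conormal derivative on $\partial B_\epsilon$. Using $\nabla\omega_\epsilon=(r_{\epsilon,k}^{2-n}-\epsilon^{2-n})^{-1}\nabla_x\Phi_{g}$ and the expansion of $\nabla_x\Phi_{g}$, the outward normal is $\overrightarrow{\nu}=x/\epsilon$ and, exactly in parallel with \eqref{44},
\[
(\gamma\nabla\omega_\epsilon)\cdot(\gamma\overrightarrow{\nu})=\frac{\sum_{i\geq 1}C'_{i}(x)\epsilon^{i-n}}{\epsilon^{2}(r_{\epsilon,k}^{2-n}-\epsilon^{2-n})}(\gamma x)^{T}(\gamma x)+\text{(lower order)}.
\]
Since $r_{\epsilon,k}=\widetilde{r_{\epsilon,k}}\epsilon^{(n-1)/(n-2)}$ gives $r_{\epsilon,k}^{2-n}-\epsilon^{2-n}=\epsilon^{1-n}(\widetilde{r_{\epsilon,k}}^{\,2-n}-\epsilon)$, and since $(\gamma x)^{T}(\gamma x)=\epsilon(\gamma^{T}\gamma x)\cdot\overrightarrow{\nu}$ on $\partial B_\epsilon$, the scalar factor $\lambda_\epsilon:=\frac{\sum_i C'_{i}\epsilon^{i-n}}{\epsilon(r_{\epsilon,k}^{2-n}-\epsilon^{2-n})}$ matches, up to sign, the coefficient of $\mu_\epsilon$; this algebraic coincidence is precisely what the constants $\widetilde{C_{i}}$ in \eqref{eq:df1} are designed to encode.

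The key device is the auxiliary function $q_\epsilon$, which I would take on each ball $B_\epsilon(x_{\epsilon,k})$ to be the smooth (essentially quadratic) function whose conormal flux reproduces the leading flux of $\omega_\epsilon$, namely $\gamma^{T}\gamma\nabla q_\epsilon=\lambda_\epsilon\,\gamma^{T}\gamma(x-x_{\epsilon,k})$. By construction $(\gamma\nabla q_\epsilon)\cdot(\gamma\overrightarrow{\nu})$ agrees with the flux of $\omega_\epsilon$ on $\partial B_\epsilon$, while $\nabla\cdot(\gamma^{T}\gamma\nabla q_\epsilon)=\lambda_\epsilon\,\nabla\cdot(\gamma^{T}\gamma x)=-\mu_\epsilon$ on $B_\epsilon$ (to leading order, treating $\lambda_\epsilon$ as frozen at the centre and absorbing its smooth variation into the error). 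Applying the divergence theorem to $(\gamma^{T}\gamma\nabla q_\epsilon)\,v_\epsilon\phi$ over $B_\epsilon\cap D$ then yields
\[
\int_{\partial B_\epsilon\cap D}(\gamma\nabla q_\epsilon)\cdot(\gamma\overrightarrow{\nu})v_\epsilon\phi\,dS_x=-\int_{B_\epsilon\cap D}\mu_\epsilon v_\epsilon\phi\,dx+\int_{B_\epsilon\cap D}(\gamma\nabla q_\epsilon)\cdot(\gamma\nabla(v_\epsilon\phi))\,dx-\int_{B_\epsilon\cap\partial D}(\gamma\nabla q_\epsilon)\cdot(\gamma\overrightarrow{\nu})v_\epsilon\phi\,dS_x.
\]
I would then dispose of the last two terms: since $|\gamma^{T}\gamma\nabla q_\epsilon|\leq C$ on $B_\epsilon$ while $|B_\epsilon|\sim\epsilon^{n}$, the gradient–coupling term is $o(1)$ after summing over the $O(\epsilon^{1-n})$ balls by Cauchy--Schwarz (using that $v_\epsilon$ is bounded in $H^{1}$ and $\phi$ smooth), and the $\partial D$–boundary term vanishes to leading order by the oddness of $x\mapsto(\gamma^{T}\gamma x)\cdot\overrightarrow{\nu}$ on the nearly flat $C^{1,\alpha}$ piece $B_\epsilon\cap\partial D$, by the same local flattening used for $E_2$ in Lemma~\ref{le41}.

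Summing over $k$ and invoking $\mu_\epsilon(x)\,dx\to\mu(x)\,dS_x$ in $H^{-1}(D)$ paired against $v_\epsilon\phi\rightharpoonup v\phi$ in $H^{1}$ (note $\phi|_\Gamma=0$, and $v_\epsilon\to v$ strongly in $L^2$ by Rellich), I obtain that the sum of the $q_\epsilon$–surrogate fluxes converges to $-\int_\Sigma\phi v\mu\,dS_x$. It then remains to compare $E_3$ with its surrogate: the difference $\int_{\partial B_\epsilon\cap D}(\gamma\nabla(\omega_\epsilon-q_\epsilon))\cdot(\gamma\overrightarrow{\nu})v_\epsilon\phi\,dS_x$ I would treat by integrating $\omega_\epsilon-q_\epsilon$ by parts over the full ball, where the flux jump of $\omega_\epsilon$ across the inner sphere $\partial B_{r_{\epsilon,k}}$ produces a remainder weighted by the trace of $v_\epsilon$ on $\partial B_{r_{\epsilon,k}}\subset\overline{T_\epsilon}$. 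Because $v_\epsilon\geq 0$ on $T_\epsilon$ and the inner flux has the definite sign already exhibited in \eqref{44}, this remainder is shown to be non-negative in the limit, giving the lower bound $\geq-\int_\Sigma\phi v\mu\,dS_x$; it vanishes identically when $v_\epsilon=0$ on $T_\epsilon$, which is exactly the equality case.

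The main obstacle I anticipate is the uniform control of the errors summed over the $O(\epsilon^{1-n})$ balls: one must show that the higher-order terms of the Green's-function expansion, the discrepancy between the Riemannian direction $V_{x,y}$ and the Euclidean radial direction on $\partial B_\epsilon$, and the smooth variation of the coefficients $C'_{i}(x)$ all contribute only $o(1)$ after summation. The second, more delicate point is to make rigorous the sign of the inner-sphere remainder — reconciling the arbitrary sign of $\phi$ with the one-sidedness of the bound — so as to cleanly separate the inequality from the equality case $v_\epsilon|_{T_\epsilon}=0$.
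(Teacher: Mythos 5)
Your proposal follows essentially the same route as the paper: the same quadratic auxiliary function $q_{\epsilon}$ matching the conormal flux of $\omega_{\epsilon}$ on $\partial B_{\epsilon}$, the same integration by parts over $B_{\epsilon}\cap D$ producing the volume density $\mu_{\epsilon}$, the same disposal of the gradient--coupling and $\partial D$--boundary remainders, and the same passage to the limit via the assumed $H^{-1}$ convergence $\mu_{\epsilon}\,dx\to\mu\,dS_x$ paired with $v_{\epsilon}\rightharpoonup v$. The only cosmetic differences are that the paper controls the $\partial D$--term by the near-tangency bound $|x\cdot\overrightarrow{\nu}|\lesssim|x'|^{1+\alpha}$ rather than an oddness argument, and that your extra inner-sphere comparison of $E_3$ with its surrogate is superfluous here, since the two fluxes agree exactly on $\partial B_{\epsilon}$ (the sign issue you worry about belongs to the $E_1$ term, treated separately in the paper).
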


\begin{proof} \ Suppose that $ \epsilon $ is sufficiently small. By direct calculations, we have
$$ \big(\gamma \nabla \omega_{\epsilon}\big)\cdot\big(\gamma \overrightarrow{\nu}\big)= \frac{\sum_{i=1}^{\infty}\widetilde{C_{i}}\epsilon^{i-n}}{\epsilon^{2}(r_{\epsilon}^{2-n}-\epsilon^{2-n})}(\gamma x)^{T}(\gamma x) = \frac{\sum_{i=1}^{\infty}\widetilde{C_{i}}\epsilon^{i-1}}{\epsilon^{2}(\widetilde{r_{\epsilon}}^{2-n}-\epsilon)}(\gamma x)^{T}(\gamma x) $$
on $ \partial B_{\epsilon} $, where $ r_{\epsilon}=\widetilde{r_{\epsilon}}\epsilon^{\frac{n-1}{n-2}}$ and $ \{ \widetilde{C_{i}}\}_{i=1}^{\infty} $ are bounded.

Motivated by Cioranescu and Murant {\cite{cm82a,cm82b}}, we consider the following  auxiliary function
\begin{equation*}
     q_{\epsilon}(x):= \left\{
     \begin{aligned}
     & \frac{\sum_{i=1}^{\infty}\widetilde{C_{i}}\epsilon^{i-1}}{2\epsilon(\widetilde{r_{\epsilon}}^{2-n}-\epsilon)}\bigg(d^{2}_{g}(x,0)-\epsilon^{2}\bigg),\ \ \ d_{g}(x,0)\leq\epsilon \\ \\
     & 0,\ \ \ \ \ \ \ \ \ \ \ \ \ \ \ \ \ \ \ \ \ \ \ \ \ \ \ \ \ \ \ \ \ \ \ \ \ \ \  d_{g}(x,0)>\epsilon,
     \end{aligned}
     \right.
\end{equation*}
which gives that
\begin{align}
\label{413}
(\gamma \nabla q_{\epsilon})\cdot (\gamma \overrightarrow{\nu})\ = \ (\gamma \nabla \omega_{\epsilon})\cdot (\gamma \overrightarrow{\nu}) \ \ \ \text{on} \ \ \partial B_{\epsilon},
\end{align}
and
\begin{align}
\label{414}
\lim_{\epsilon\rightarrow 0}\int_{D}|\nabla q_{\epsilon}(x)|^{2}dx \ = \ 0.
\end{align}
In fact, (\ref{413}) is obvious. In the following, we need to show (\ref{414}). In fact, direct calculation yields that
\begin{align*}
 \int_{B_{\epsilon}}|\nabla q_{\epsilon}(x)|^{2}dx \ \ = \ \ & \int_{B_{\epsilon}}\left(\frac{\sum_{i=1}^{\infty}\widetilde{C_{i}}\epsilon^{i-1}}{\widetilde{r_{\epsilon}}^{2-n}-\epsilon}\right)^{2}\frac{x^{2}}{\epsilon^{2}}dx \\
\leq \ \ &C(n)\left(\frac{\sum_{i=1}^{\infty}\widetilde{C_{i}}\epsilon^{i-1}}{(\widetilde{r_{\epsilon}}^{2-n}-\epsilon)\epsilon}\right)^{2}\int_{0}^{\epsilon}r^{n+1}dr  \\
\leq \ \ &C(n) \frac{(\sum_{i=1}^{\infty}\widetilde{C_{i}}\epsilon^{i-1})^{2}}{(\widetilde{r_{\epsilon}}^{2-n}-\epsilon)^{2}}\epsilon^{n}   \\
\leq \ \ &C(n)\left(\frac{1}{(1-\epsilon)(\widetilde{r_{\epsilon}}^{2-n}-\epsilon)}\right)^{2}\epsilon^{n}  \ \  ( c_{1} \leq \widetilde{r_{\epsilon}} \leq c_{2} )\\
\leq \ \ &C(n)O(\epsilon^{n})
\end{align*}
which verifies (\ref{414}).

Integrating by parts, we see that
\begin{align}
\label{415}
& \int_{D}\phi v_{\epsilon}\nabla\cdot(\gamma^{T}\gamma\nabla q_{\epsilon}) dx   \nonumber \\
 =& -\int_{D}\bigg(\gamma\nabla(\phi v_{\epsilon})\bigg)\cdot\bigg(\gamma\nabla q_{\epsilon}(x)\bigg) dx
+\sum_{k}\int_{\partial(B_{\epsilon}(x_{\epsilon,k})\cap D)}\big(\gamma \nabla q_{\epsilon}\big)\cdot\big(\gamma \overrightarrow{\nu}\big)\phi v_{\epsilon} dS_{x} .
\end{align}
Then by applying H\"{o}lder's inequality to the first term of (\ref{415}), we have
$$ \int_{D}\bigg(\gamma\nabla(\phi v_{\epsilon})\bigg)\cdot\bigg(\gamma\nabla q_{\epsilon}\bigg) dx \ \leq \ C(n)\big|\big|\gamma \nabla(\phi v_{\epsilon})\big|\big|_{L^{\infty}(D)}\int_{D}|\gamma \nabla q_{\epsilon}|^{2} dx,$$
which means that (by (\ref{414}))
$$ \lim_{\epsilon \rightarrow 0} \int_{D}\bigg(\gamma\nabla(\phi v_{\epsilon})\bigg)\cdot\bigg(\gamma\nabla q_{\epsilon}(x)\bigg) dx \ = \ 0 .$$
Therefore it suffices for us to only study the second term of (\ref{415}),
\begin{align}
\label{416}
& \sum_{k}\int_{\partial(B_{\epsilon}(x_{\epsilon,k})\cap D)}\big(\gamma \nabla q_{\epsilon}\big)\cdot\big(\gamma \overrightarrow{\nu}\big)\phi v_{\epsilon} dS_{x} \nonumber \\
= \ &\sum_{k}\left(\int_{\partial B_{\epsilon}(x_{\epsilon ,k})\cap D}+\int_{\partial D \cap B_{\epsilon}(x_{\epsilon,k})}\right)\big(\gamma \nabla q_{\epsilon})\cdot(\gamma \overrightarrow{\nu}\big)\phi v_{\epsilon} dS_{x} \nonumber \\
=& : \ E_{3,1} \ + \ E_{3,2}
\end{align}
For the term $ E_{3,2}$, we can use a similar argument to that for Lemma \ref{le41} to obtain
\begin{align}
\label{417}
\int_{\partial D \cap B_{\epsilon}}(\gamma \nabla q_{\epsilon})\cdot(\gamma \overrightarrow{\nu})\phi v_{\epsilon} dS_{x}
\leq \ & C||\phi v_{\epsilon}||_{L^{\infty}}\int_{\partial D \cap B_{\epsilon}}\frac{\sum_{i=1}^{\infty}\widetilde{C_{i}}\epsilon^{i-1}}{(\widetilde{r_{\epsilon}}^{2-n}-\epsilon)\epsilon} |x \cdot \overrightarrow{\nu}|dS_{x} \nonumber \\
\leq \ & C||\phi v_{\epsilon}||_{L^{\infty}} \int_{\partial D \cap B_{\epsilon}}\frac{\sum_{i=1}^{\infty}\widetilde{C_{i}}\epsilon^{i-1}}{(\widetilde{r_{\epsilon}}^{2-n}-\epsilon)\epsilon}|x'|^{1+\alpha}dS_{x}  \nonumber  \\
\leq \ & C||\phi v_{\epsilon}||_{L^{\infty}}\int_{B_{\epsilon}}\frac{\sum_{i=1}^{\infty}\widetilde{C_{i}}\epsilon^{i-1}}{(\widetilde{r_{\epsilon}}^{2-n}-\epsilon)\epsilon}|x'|^{1+\alpha}dx' \nonumber \\
\leq \ & C||\phi v_{\epsilon}||_{L^{\infty}} \frac{\sum_{i=1}^{\infty}\widetilde{C_{i}}\epsilon^{i-1}}{(\widetilde{r_{\epsilon}}^{2-n}-\epsilon)\epsilon}\int_{0}^{\epsilon}r^{n+\alpha-1}dr  \nonumber   \\
\leq \ & C||\phi v_{\epsilon}||_{L^{\infty}} \frac{\sum_{i=1}^{\infty}\widetilde{C_{i}}\epsilon^{i-1}}{\widetilde{r_{\epsilon}}^{2-n}-\epsilon }\epsilon^{n+\alpha-1}  \nonumber \\
\leq \ & \frac{C}{(\widetilde{r_{\epsilon}}^{2-n}-\epsilon)(1-\epsilon)}\epsilon^{n+\alpha-1} \ \ ( c_{1} \leq \widetilde{r_{\epsilon}} \leq c_{2} )   \nonumber \\
= \ & CO(\epsilon^{n+\alpha-1}),
\end{align}
where $ C $ is a positive constant depending only on $ n $, $ a,b $, $ ||\phi v_{\epsilon}||_{L^{\infty}(D)}$ and $ ||C'_{i}||_{L^{\infty}(D)}$. \\
Thus
 $$ \lim_{\epsilon \rightarrow 0}\sum_{k}\int_{\partial D \cap B_{\epsilon}(x_{\epsilon,k})}(\gamma \nabla q_{\epsilon})\cdot(\gamma \overrightarrow{\nu})\phi v_{\epsilon} dS_{x} \ = \ 0 .   $$
For the term $ E_{3,1}$, returning now to (\ref{415}), we need to compute
\begin{align}
\label{418}
\int_{D\cap B_{\epsilon}}\phi v_{\epsilon}\nabla\cdot(\gamma^{T}\gamma\nabla q_{\epsilon}) dx \ = \ & \int_{D \cap B_{\epsilon}} \frac{\sum_{i=1}^{\infty}\widetilde{C_{i}}\epsilon^{i-1}}{(\widetilde{r_{\epsilon}}^{2-n}-\epsilon)\epsilon} \nabla\cdot(\gamma^{T}\gamma x)\phi v_{\epsilon}dx   \nonumber  \\
= \ & -\int_{D}\frac{(\sum_{i=1}^{\infty}\widetilde{C_{i}}\epsilon^{i-1})\widetilde{r_{\epsilon}}^{n-2}}{\epsilon \widetilde{r_{\epsilon}}^{n-2}-1}\nabla\cdot(\gamma^{T}\gamma x)\frac{1}{\epsilon}\chi_{B_{\epsilon}}(x)\phi v_{\epsilon} dx.   \nonumber  \\
\end{align}
By (\ref{413}), (\ref{415}), (\ref{418}) and the definition of $ \mu_{\epsilon}(x) $, we obtain
\begin{align}
\label{eq15}
\lim_{\epsilon \rightarrow 0}\sum_{k}\int_{\partial B_{\epsilon}(x_{\epsilon,k})\cap D}(\gamma \nabla \omega_{\epsilon})\cdot (\gamma \overrightarrow{\nu})\phi v_{\epsilon} dS_{x} \ = \ & \lim_{\epsilon \rightarrow 0}\sum_{k}\int_{\partial B_{\epsilon}(x_{\epsilon,k})\cap D}(\gamma \nabla q_{\epsilon})\cdot (\gamma \overrightarrow{\nu})\phi v_{\epsilon} dS_{x}   \nonumber  \\
\geq \ & -\lim_{\epsilon \rightarrow 0}\int_{D}\phi v_{\epsilon}\mu_{\epsilon}(x)dx.
\end{align}
From our assumption, we have
$$  \lim_{\epsilon\rightarrow 0}\int_{D}\phi v_{\epsilon} \mu_{\epsilon}(x) dx \ = \ \int_{\Sigma}\phi v \mu(x)dS_{x}.   $$
Therefore
$$\lim_{\epsilon\rightarrow 0}\sum_{k}\int_{\partial B_{\epsilon}(x_{\epsilon,k})\cap D}(\gamma\nabla\omega_{\epsilon})\cdot(\gamma\overrightarrow{\nu})\phi v_{\epsilon} dS_{x}\ \ \ \geq \ \ \ -\int_{\Sigma}\phi v \mu(x)dS_{x} .$$

This finishes the proof of the lemma \ref{le42} .
\end{proof}

\section*{Acknowledgments}
The work of J. Li was partially supported by the NSF 
of China No. 11971221, the Shenzhen Sci-Tech Fund No. RCJC20200714114556020, JCYJ20190809150413261,   and JCYJ20170818153840322, 
and Guangdong Provincial Key Laboratory of Computational Science
and Material Design No. 2019B030301001.
The work of H. Liu was supported by the startup fund from City University of Hong Kong and the Hong Kong RGC General Research Fund (projects 12301420, 12302919 and 12301218). The work of L. Tang was partially supported by NNSFC grants of China (No.11831009) and the Fundamental Research Funds for the Central Universities (No. CCNU19TS032).
The work of J. Wang was partially supported by the Shenzhen Sci-Tech Fund No. JCYJ20180307151603959.

\vspace{20pt}

\end{document}